\numberwithin{equation}{section}
\begin{document}

\newtheorem{thm}{Theorem}[section]
\newtheorem{cor}[thm]{Corollary}
\newtheorem{prop}[thm]{Proposition}
\newtheorem{conj}[thm]{Conjecture}
\newtheorem{lem}[thm]{Lemma}
\newtheorem{Def}[thm]{Definition}
\newtheorem{rem}[thm]{Remark}
\newtheorem{prob}[thm]{Problem}
\newtheorem{ex}{Example}[section]

\newcommand{\be}{\begin{equation}}
\newcommand{\ee}{\end{equation}}
\newcommand{\ben}{\begin{enumerate}}
\newcommand{\een}{\end{enumerate}}
\newcommand{\beq}{\begin{eqnarray}}
\newcommand{\eeq}{\end{eqnarray}}
\newcommand{\beqn}{\begin{eqnarray*}}
\newcommand{\eeqn}{\end{eqnarray*}}
\newcommand{\bei}{\begin{itemize}}
\newcommand{\eei}{\end{itemize}}

\newcommand{\pa}{{\partial}}
\newcommand{\V}{{\rm V}}
\newcommand{\R}{{\bf R}}
\newcommand{\K}{{\rm K}}
\newcommand{\e}{{\epsilon}}
\newcommand{\tomega}{\tilde{\omega}}
\newcommand{\tOmega}{\tilde{Omega}}
\newcommand{\tR}{\tilde{R}}
\newcommand{\tB}{\tilde{B}}
\newcommand{\tGamma}{\tilde{\Gamma}}
\newcommand{\fa}{f_{\alpha}}
\newcommand{\fb}{f_{\beta}}
\newcommand{\faa}{f_{\alpha\alpha}}
\newcommand{\faaa}{f_{\alpha\alpha\alpha}}
\newcommand{\fab}{f_{\alpha\beta}}
\newcommand{\fabb}{f_{\alpha\beta\beta}}
\newcommand{\fbb}{f_{\beta\beta}}
\newcommand{\fbbb}{f_{\beta\beta\beta}}
\newcommand{\faab}{f_{\alpha\alpha\beta}}

\newcommand{\pxi}{ {\pa \over \pa x^i}}
\newcommand{\pxj}{ {\pa \over \pa x^j}}
\newcommand{\pxk}{ {\pa \over \pa x^k}}
\newcommand{\pyi}{ {\pa \over \pa y^i}}
\newcommand{\pyj}{ {\pa \over \pa y^j}}
\newcommand{\pyk}{ {\pa \over \pa y^k}}
\newcommand{\dxi}{{\delta \over \delta x^i}}
\newcommand{\dxj}{{\delta \over \delta x^j}}
\newcommand{\dxk}{{\delta \over \delta x^k}}

\newcommand{\px}{{\pa \over \pa x}}
\newcommand{\py}{{\pa \over \pa y}}
\newcommand{\pt}{{\pa \over \pa t}}
\newcommand{\ps}{{\pa \over \pa s}}
\newcommand{\pvi}{{\pa \over \pa v^i}}
\newcommand{\ty}{\tilde{y}}
\newcommand{\bGamma}{\bar{\Gamma}}

\title {Isoperimetric inequality on Finsler metric measure manifolds with non-negative weighted Ricci curvature\footnote{The first author is supported by the National Natural Science Foundation of China (12371051, 12141101, 11871126).}}
\author{Xinyue Cheng, \ Yalu Feng, \ Liulin Liu\\
School of Mathematical Sciences, Chongqing Normal University, \\ Chongqing, 401331, P.R. China\\
E-mail: chengxy@cqnu.edu.cn, fengyl2824@qq.com,\\ 2023010510010@stu.cqnu.edu.cn}

\date{}

\maketitle

\begin{abstract}
In this paper, we define the volume entropy and the second Cheeger constant and prove a sharp isoperimetric inequality involving the volume entropy on Finsler metric measure manifolds with non-negative weighted Ricci curvature ${\rm Ric}_{\infty}$. As an application, we prove a Cheeger-Buser type inequality for the first eigenvalue of Finsler Laplacian by using  the volume entropy and the second Cheeger constant.\\
{\bf Keywords:} Finsler metric measure manifold; weighted Ricci curvature; isoperimetric inequality; volume entropy; second Cheeger constant; first eigenvalue\\
{\bf Mathematics Subject Classification:} 53C60,  53B40, 58C35
\end{abstract}

\maketitle

\section{Introduction}\label{intr}

The isoperimetric inequality is one of the most important topics in geometry and has attracted extensive attention. Recent years have witnessed limited yet fundamental advances in extending the isoperimetric inequality to Finsler geometric settings. Zhao and Shen \cite{shenzhao} derived a Croke-type isoperimetric inequality for a relatively compact domain $\Omega$ with smooth boundary $\pa \Omega$ in  reversible Finsler manifolds equipped with the Busemann-Hausdorff volume form or the Holmes-Thompson volume form. They further proved that the equality holds if and only if $(\overline{\Omega}, F|_{\overline{\Omega}})$ is a hemisphere of a constant sectional curvature sphere. Later, Ohta \cite{ohtaloc} developed the needle decomposition (also called the localization) method on Finsler metric measure manifolds and derived a L\'{e}vy-Gromov type isoperimetric inequality on a complete Finsler metric measured manifold with finite reversibility  and finite volume. In subsequent work, Ohta \cite{ohtabl} proved the Bakry-Ledoux  isoperimetric inequality (also called the Gaussian isoperimetric inequality) on  compact Finsler metric measure manifolds satisfying ${\rm Ric}_{\infty}\geq K>0$, by using the Bochner inequality and the asymptotic behavior of (nonlinear or linearized) heat semigroups determined by a global solution to the heat equation.
More recently, by using optimal mass transport theory Balogh-Krist\'{a}ly \cite{BK}  proved a sharp isoperimetric inequality in $\mathrm{CD}(0, N) \, (N\in(1, \infty))$ metric measure spaces having Euclidean volume growth  in the sense that the asymptotic volume ratio
$$
\mathrm{AVR}_{(M, d, m)}:=\lim _{r \rightarrow \infty} \frac{m\left(B_r(x)\right)}{\omega_{N} r^{N}}>0,
$$
where $\omega_{N}(=\pi^{N/2} / \Gamma(1+N/2))$ denotes the volume of the Euclidean unit ball in $\mathbb{R}^{N}$ whenever $N\in \mathbb{N}$. In particular, the isoperimetric inequality holds on reversible Finsler metric measured manifolds with non-negative weighted Ricci curvature ${\rm Ric}_{N}$.
Furthermore, Manini \cite{manini} proved a sharp isoperimetric inequality on forward complete Finsler metric measure manifolds with non-negative weighted Ricci curvature ${\rm Ric}_{N}$ $\left( N\in(1, \infty)\right)$ having Euclidean volume growth. He also proved a rigidity result for his sharp isoperimetric inequality under the additional hypotheses of boundedness of the isoperimetric set and the finite reversibility of the space.

In this paper, we will first prove a sharp isoperimetric inequality on forward complete non-compact Finsler metric measure manifolds with non-negative weighted Ricci curvature ${\rm Ric}_{\infty}$. Our isoperimetric inequality involves a volume entropy, first introduced by Brooks \cite{brook} (also see Chapter 22 in \cite{peterli}), which plays a crucial role in geometric analysis. Further, as  applications, we will give new upper and lower bounds for the first eigenvalue of Finsler Laplacian in terms of the volume entropy and the second Cheeger constant.

In order to introduce our main results, we first give some necessary definitions and notations. We denote a Finsler manifold $(M, F)$ equipped with a positive smooth measure $\mathfrak{m}$ by $(M, F, \mathfrak{m})$ which we call a Finsler metric measure manifold. Such manifold is not a metric measure space in the usual sense since the Finsler metric $F$ may be non-reversible, that is, $F(x, y)\neq F(x, -y)$ may occur.
This non-reversibility causes the asymmetry of the associated distance function. In order to overcome this defect, Rademacher \cite{Ra} defined the reversibility $\Lambda_{F}$ of $F$ by
\be
\Lambda_{F}:=\sup _{(x, y) \in TM \backslash\{0\}} \frac{F(x,y)}{F(x, -y)}.
\ee
Obviously, $\Lambda_{F} \in [1, \infty]$ and $\Lambda_{F}=1$ if and only if $F$ is reversible. Later, Ohta \cite{Ohta} extended the concepts of uniform smoothness and uniform convexity in Banach space theory into Finsler geometry and gave their geometric interpretation. The uniform smoothness and uniform convexity of a Finsler metric $F$ mean that there exist two uniform constants $0<\kappa^{*}\leq 1 \leq \kappa <\infty$ such that for $x\in M$, $V\in T_xM\setminus \{0\}$ and $W\in T_xM$, we have
\begin{equation}
\kappa^{*}F^{2}(x, W)\leq g_{V}(W, W)\leq \kappa F^{2}(x, W), \label{usk}
\end{equation}
where $g_V$ is the weighted Riemann metric  induced by $V$ on the tangent bundle $TM$ of Finsler manifold $(M, F)$. If $F$ satisfies the uniform smoothness and uniform convexity, then $\Lambda_{F}$ is finite with
\be
1 \leq \Lambda_{F} \leq \min \left\{\sqrt{\kappa}, \sqrt{1 / \kappa^*}\right\}. \label{Lamba}
\ee
$F$ is Riemannian if and only if $\kappa=1$ if and only if $\kappa^{*}=1$ \cite{Ohta,OHTA,Ra}.

We define the forward and backward geodesic balls of radius $R$ with center at $x_{0}\in M$, respectively, as
$$
B_{R}^{+}(x_0):=\{x \in M \mid d(x_{0}, x)<R\},\qquad B_{R}^{-}(x_0):=\{x \in M \mid d(x, x_{0})<R\}.
$$
Given a subset $E\subset M$, the (forward) Minkowski exterior boundary measure (or Minkowski content) is defined as
\be
\mathfrak{m}^{+}(E):=\liminf\limits_{\varepsilon\rightarrow 0^{+}} \frac{\mathfrak{m}(B^{+}(E, \varepsilon))-\mathfrak{m}(E)}{\varepsilon}, \label{Mcontent}
\ee
where $B^{+}(E, \varepsilon):=\{z \in M \mid \inf_{x\in E}d(x, z)<\varepsilon\}$ is the forward $\varepsilon$-neighborhood of $E$ for $\varepsilon >0$.

Following the arguments in \cite{brook,peterli},  we define the volume entropy on Finsler metric measure manifolds as follows.
\begin{Def}\label{def-Log}
Let $(M, F, \mathfrak{m})$ be an $n$-dimensional Finsler metric measure manifold. The volume entropy is defined by
\be
{\rm VE}_{F}:= \lim\limits_{r\rightarrow\infty} \frac{\log \mathfrak{m}(B^{+}_{r}(x))}{r}\in [0, +\infty], \ \ x\in M.
\ee
\end{Def}
It is easy to see that the volume entropy ${\rm VE}_{F}$ is independent of the choice of $x\in M$ and it characterizes the exponential growth of the volume of $M$. From the definition of the volume entropy, we can prove the following isoperimetric inequality.

\begin{thm}\label{thm-isc}
Let $(M, F, \mathfrak{m})$ be an $n$-dimensional forward complete non-compact Finsler metric measure manifold satisfying ${\rm Ric_{\infty}}\geq 0$. Then for every bounded Borel subset $E\subset M$, it holds
\be
\mathfrak{m}^{+}(E)\geq {\rm VE}_{F} \cdot \mathfrak{m}(E).\label{eqn-isc}
\ee
Moreover, inequality  (\ref{eqn-isc}) is sharp.
\end{thm}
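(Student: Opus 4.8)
\medskip
\noindent\textbf{Proof strategy.}
The plan is to reduce (\ref{eqn-isc}) to a uniform exponential volume‑growth bound for the function $\varphi(r):=\mathfrak{m}(B^{+}(E,r))$ and then to extract that bound from ${\rm Ric}_{\infty}\ge0$. First I would discard the trivial cases: if $\mathfrak{m}(E)=0$ there is nothing to prove, and if $\mathfrak{m}^{+}(E)=+\infty$ then (\ref{eqn-isc}) is automatic; so assume $0<\mathfrak{m}(E)<\infty$ (finite since $E$ lies in a relatively compact forward ball, by forward completeness) and $\mathfrak{m}^{+}(E)<\infty$, whence $E$ has finite perimeter (its perimeter being bounded by $\mathfrak{m}^{+}(E)$), and, by approximation, one may take $E$ to be a bounded open set with smooth boundary $\partial E$. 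Fixing $y_{0}\in E$ and $R_{0}$ with $E\subset B^{+}_{R_{0}}(y_{0})$, the inclusions $B^{+}_{r}(y_{0})\subset B^{+}(E,r)\subset B^{+}_{R_{0}+r}(y_{0})$ (from $d(E,x)\le d(y_{0},x)$ and the triangle inequality) together with Definition \ref{def-Log} give $\frac{1}{r}\log\varphi(r)\to{\rm VE}_{F}$ as $r\to\infty$. Hence (\ref{eqn-isc}) is equivalent to ${\rm VE}_{F}\le h_{E}$, where $h_{E}:=\mathfrak{m}^{+}(E)/\mathfrak{m}(E)$, and it suffices to prove the estimate $\varphi(r)\le\mathfrak{m}(E)\,e^{h_{E}r}$ for all $r\ge0$; this also shows that ${\rm VE}_{F}=+\infty$ can occur only when $\mathfrak{m}(E)=0$ or $\mathfrak{m}^{+}(E)=+\infty$, cases already treated.

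The heart of the matter is to show, using the curvature hypothesis, that $r\mapsto\log\varphi(r)$ is concave on $[0,\infty)$ — the Finsler, ${\rm CD}(0,\infty)$ analogue of the log‑Brunn--Minkowski phenomenon. Granting this: $\varphi$ is absolutely continuous with $\varphi'(r)=\mathfrak{m}^{+}(B^{+}(E,r))$ for a.e.\ $r$ (using $B^{+}(E,r+\varepsilon)=B^{+}(B^{+}(E,r),\varepsilon)$ and the co‑area formula), while $\varphi(0)=\mathfrak{m}(E)$ and $\varphi'(0^{+})=\mathfrak{m}^{+}(E)$, so concavity of $\log\varphi$ forces $\varphi'(r)/\varphi(r)\le\varphi'(0^{+})/\varphi(0)=h_{E}$ for all $r$, which integrates to $\varphi(r)\le\mathfrak{m}(E)e^{h_{E}r}$. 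To get the concavity I would work with $\rho:=d(E,\cdot)$ and the normal exponential map of $\partial E$: on $M\setminus\overline{E}$ write $d\mathfrak{m}=\hat{\jmath}_{q}(t)\,dt\,d\mathcal{A}(q)$ in Fermi‑type coordinates, where $\mathcal{A}$ is the $\mathfrak{m}$‑weighted area on $\partial E$ (so $\mathcal{A}(\partial E)=\mathfrak{m}^{+}(E)$), $\hat{\jmath}_{q}(0)=1$, and $\hat{\jmath}_{q}$ is defined on $[0,\tau(q))$ up to the cut time $\tau(q)$. Feeding ${\rm Ric}_{\infty}\ge0$ into the Riccati comparison for the weighted second fundamental form of the level sets $\{\rho=t\}$ along these geodesics (a manifestation of the Finsler Bochner inequality) yields that each $t\mapsto\hat{\jmath}_{q}(t)$ is log‑concave (and, adjoining the inward normal geodesics, that $t\mapsto\int_{-\sigma(q)}^{t}\hat{\jmath}_{q}$ is log‑concave); one then integrates this fibrewise information against $\mathcal{A}$ to obtain the concavity of $\log\varphi$. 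Equivalently — and probably the cleanest rigorous route, matching the tools recalled in the Introduction — one can invoke Ohta's needle/localization decomposition \cite{ohtaloc} (applied to large metric balls, or in a localized form, adapted to $\rho$), reducing the problem to the one‑dimensional ${\rm CD}(0,\infty)$ situation, i.e.\ to weighted intervals with log‑concave densities, where the estimate is elementary.

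For sharpness, the model example is $M=\mathbb{R}$ with the Euclidean metric and $\mathfrak{m}=e^{ax}\,dx$, $a>0$: the density exponent $-ax$ has vanishing Hessian, so ${\rm Ric}_{\infty}\equiv0$, while $\mathfrak{m}(B^{+}_{r}(0))=\frac{2}{a}\sinh(ar)$ gives ${\rm VE}_{F}=a$; for $E=[-s,0]$ one computes $\mathfrak{m}^{+}(E)/\mathfrak{m}(E)=a\,\frac{1+e^{-as}}{1-e^{-as}}\to a$ as $s\to\infty$, so no constant larger than ${\rm VE}_{F}$ is admissible in (\ref{eqn-isc}). (When ${\rm VE}_{F}=0$, e.g.\ $M=\mathbb{R}^{n}$ flat, sharpness is trivial, taking $E$ to be large balls.)

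I expect the main obstacle to be exactly the passage from the fibrewise log‑concavity of $\hat{\jmath}_{q}$ to the global concavity of $\log\varphi$: a naive integration does not work, since an average of log‑concave functions need not be log‑concave, so one must exploit the cut‑locus structure — the directions in which $\partial E$ is strongly concave have short cut time and do not drive the large‑$r$ growth — and handle the Finsler‑specific technicalities (non‑smoothness of the forward distance, asymmetry of forward versus backward balls, the measure‑zero non‑transport set, and the absolute continuity of $\varphi$). The needle‑decomposition route packages these difficulties into Ohta's theorem and is likely the safest path to a fully rigorous argument.
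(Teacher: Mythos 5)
Your overall reduction — showing $\mathrm{VE}_F\le h_E:=\mathfrak{m}^+(E)/\mathfrak{m}(E)$ via an exponential bound on $\varphi(r)=\mathfrak{m}(B^+(E,r))$ — is equivalent to what is needed, but the route you propose for establishing it is genuinely different from the paper's and leaves a real gap at exactly the point you flag. The paper does not prove (and does not need) log-concavity of $\log\varphi$. Instead it applies the Finsler Brunn--Minkowski inequality (Ohta, Theorem 18.8, the defining inequality of $\mathrm{CD}(0,\infty)$) once, to the pair $E$, $B^+_R(x_0)$ with $x_0\in E$, using the elementary inclusion $Z_t\bigl(E,B^+_R(x_0)\bigr)\subset B^+\bigl(E,t(d+R)\bigr)$, $d=\mathrm{diam}(E)$. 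Taking $\varepsilon=t(d+R)$ in the definition of $\mathfrak{m}^+(E)$ and sending $t\to0^+$ then $R\to\infty$ gives (\ref{eqn-isc}) directly, for arbitrary bounded Borel $E$, with no smoothing of $\partial E$, no normal exponential map, no Riccati comparison, and no needle decomposition.

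By contrast, your argument hinges on concavity of $r\mapsto\log\varphi(r)$. The Brunn--Minkowski inequality alone does not yield this for a general bounded $E$: the inclusion $Z_t\bigl(B^+(E,r_0),B^+(E,r_1)\bigr)\subset B^+\bigl(E,(1-t)r_0+tr_1\bigr)$ fails already for $E$ consisting of two far-apart points in the Euclidean plane, so one cannot read off log-concavity of $\varphi$ from the displacement convexity the way one reads it off for a single ball. Your fallback routes — fibrewise Jacobian estimates along $\partial E$ and needle decomposition with respect to $d(E,\cdot)$ — both produce a \emph{family} of one-dimensional log-concave profiles, and you correctly note that a sum or integral of log-concave functions need not be log-concave; this is precisely the missing step, and neither sketch resolves it. In addition, the fibrewise route requires $\partial E$ to be smooth, which forces an approximation step (controlling Minkowski content under approximation) that the paper entirely sidesteps by working with Borel sets throughout. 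In short, your plan substitutes a stronger, unproven intermediate claim (concavity of $\log\varphi$) for the paper's one-line Brunn--Minkowski argument, and the tools you invoke (Bochner/Riccati, localization) are heavier than what is needed here.

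On sharpness, the two arguments also diverge in content, not just method. The paper proves that on \emph{every} forward complete non-compact $(M,F,\mathfrak{m})$ with $\mathrm{Ric}_\infty\ge0$, no constant $C>\mathrm{VE}_F$ can replace $\mathrm{VE}_F$ in (\ref{eqn-isc}); this per-manifold optimality (again via Brunn--Minkowski applied to concentric balls) is what later yields $\mathrm{SCh}_F=\mathrm{VE}_F$ and hence Theorem 1.5. Your one-dimensional Gaussian-type example $(\mathbb{R},\,|\cdot|,\,e^{ax}dx)$ is correct and illuminating, but it only shows that the constant cannot be improved on one model, a strictly weaker statement that would not suffice to deduce $\mathrm{SCh}_F=\mathrm{VE}_F$ in general. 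You should replace the example with the paper's contradiction argument (or supplement it): assume $\mathfrak{m}^+(E)\ge C\mathfrak{m}(E)$ with $C>\mathrm{VE}_F$, apply Brunn--Minkowski to $B^+_r(x_0)$ and $B^+_{R+\delta}(x_0)$ to bound the logarithmic increment $\log\mathfrak{m}(B^+_{R+\delta})-\log\mathfrak{m}(B^+_R)$, and let $R\to\infty$, then $r\to0^+$.
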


In 1970, Cheeger \cite{cheeger} introduced an isoperimetric constant $h(M)$ on a compact Riemannian manifold $M$ with or without boundary, now known as the Cheeger constant, and proved the Cheeger inequality, that is, $\lambda_{1}(M) \geq \frac{1}{4} h^{2}(M)$, where $\lambda_1$ is the smallest positive eigenvalue of the Laplacian. The Cheeger inequality revealed to be extremely useful in proving lower bounds on the first eigenvalue of the Laplacian in terms of the isoperimetric constant $h$. In Finsler geometry,  Shen \cite{Shen1} introduced a Cheeger constant on Finsler metric measure manifolds defined by
\[
\mathrm{I}_{\infty}(M):=\inf_{\Omega} \frac{\nu(\partial \Omega)}{\min\{\mathfrak{m}(\Omega), \mathfrak{m}(M)-\mathfrak{m}(\Omega)\}},
\]
where the infimum is taken over all regular domains $\Omega$ in $M$ and $\nu$ denotes volume measure on $\pa \Omega$ induced by $\mathfrak{m}$ with respect to a taken normal vector along $\pa \Omega$. In this paper, we naturally introduce  the second Cheeger constant on Finsler metric measure manifolds as follows.

\begin{Def}\label{def-ch}
Let $(M, F, \mathfrak{m})$ be an $n$-dimensional Finsler metric measure manifold. The second Cheeger constant on $(M, F, \mathfrak{m})$ is defined by
\be
{\rm SCh}_{F}:= \inf\limits_{E}\frac{\mathfrak{m}^{+}(E)}{\mathfrak{m}(E)},
\ee
where the infimum is taken over all bounded subset $E\subset M$.
\end{Def}

It is worth to mention that Buser \cite{buser} characterized the upper bound for the first eigenvalue of the Laplacian on Riemannian manifolds by using the isoperimetric constant $h$, the dimension of the manifolds and the lower bound on the Ricci curvature. More precisely, Buser  proved that for any closed $n$-dimensional Riemannian manifold satisfying ${\rm Ric}\geq K$ for some $K\leq0$, the following inequality, now known as Buser's inequality, holds
$$
\lambda_{1}(M) \leq 2 \sqrt{-(n-1) K} \, h(M) +10 h^{2}(M).
$$
Recently, Ponti-Mondino established some sharp Cheeger-Buser type inequalities for the first eigenvalue $\lambda_1$ of the Laplacian in ${\rm RCD}(K,\infty)$ spaces \cite{ponti1} and  Ponti-Mondino-Semola proved rigidity results for these equalities on ${\rm RCD}(K,\infty)$ spaces \cite{ponti2}.

Our following result devotes to determining the upper and lower bounds for the first eigenvalue of Finsler-Laplacian by using the second Cheeger constant ${\rm SCh}_{F}$ and the volume entropy ${\rm VE}_{F}$.

\begin{thm}\label{thm-lamd-L}
Let $(M, F, \mathfrak{m})$ be an $n$-dimensional complete non-compact Finsler metric measure manifold with uniform smoothness and uniform convexity. Then the first eigenvalue $\lambda_1$ satisfies
\beq
\frac{1}{4\kappa^{2}}{\rm SCh}_{F}^{2}\leq \lambda_{1}(M) \leq \frac{\kappa^2}{4}{{\rm VE}_{F}^2}. \label{inq-lamd-L}
\eeq
\end{thm}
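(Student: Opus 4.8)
The plan is to prove the two halves of \eqref{inq-lamd-L} by the Finslerian analogues of Cheeger's and of Brooks' arguments, using throughout the variational description of the first eigenvalue $\lambda_1(M)$ (the bottom of the $L^{2}$‑spectrum of the Finsler–Laplacian),
\[
\lambda_1(M)=\inf\left\{\frac{\displaystyle\int_M F^{*}(du)^{2}\,d\mathfrak{m}}{\displaystyle\int_M u^{2}\,d\mathfrak{m}}\ :\ u\ \text{compactly supported Lipschitz},\ u\not\equiv 0\right\},
\]
where $F^{*}$ is the co‑metric of $F$. Two consequences of \eqref{usk} will be used: the dual metric $F^{*}$ is again uniformly smooth and uniformly convex with the same constants, so as in \eqref{Lamba} one has $\Lambda_{F^{*}}\le\sqrt{\kappa}$, i.e. the pointwise bound $F^{*}(x,-\xi)\le\sqrt{\kappa}\,F^{*}(x,\xi)$; and, for the forward distance $\rho:=d(x_0,\cdot)$ from a fixed point, $F^{*}(d\rho)=1$ holds $\mathfrak{m}$‑a.e. by completeness. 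I will also use the co‑area inequality on $(M,F,\mathfrak{m})$ in the form $\int_M F^{*}(-df)\,d\mathfrak{m}\ge\int_0^{\infty}\mathfrak{m}^{+}(\{f>t\})\,dt$ for nonnegative compactly supported Lipschitz $f$; the \emph{reversed} co‑metric $F^{*}(-df)$ is what appears, because $\mathfrak{m}^{+}$ in \eqref{Mcontent} is built from \emph{forward} $\varepsilon$‑neighborhoods.

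For the lower bound I would fix such a $u$ and run Cheeger's argument with $f:=u^{2}$, which is nonnegative, Lipschitz and compactly supported, so that each superlevel set $\{u^{2}>t\}$ is a bounded Borel set to which Definition \ref{def-ch} applies. The co‑area inequality and the layer‑cake formula give
\[
\int_M F^{*}\!\big({-}d(u^{2})\big)\,d\mathfrak{m}\ \ge\ \int_0^{\infty}\mathfrak{m}^{+}\!\big(\{u^{2}>t\}\big)\,dt\ \ge\ {\rm SCh}_{F}\int_0^{\infty}\mathfrak{m}\big(\{u^{2}>t\}\big)\,dt\ =\ {\rm SCh}_{F}\int_M u^{2}\,d\mathfrak{m}.
\]
On the other hand $-d(u^{2})=-2u\,du$, and positive homogeneity of $F^{*}$ together with $F^{*}(-du)\le\sqrt{\kappa}\,F^{*}(du)$ give the pointwise estimate $F^{*}(-d(u^{2}))\le 2\sqrt{\kappa}\,|u|\,F^{*}(du)$ (on $\{u>0\}$, $F^{*}(-d(u^{2}))=2u\,F^{*}(-du)$; on $\{u<0\}$, $F^{*}(-d(u^{2}))=2|u|\,F^{*}(du)$). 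Hence, by Cauchy–Schwarz,
\[
\int_M F^{*}\!\big({-}d(u^{2})\big)\,d\mathfrak{m}\ \le\ 2\sqrt{\kappa}\left(\int_M u^{2}\,d\mathfrak{m}\right)^{1/2}\left(\int_M F^{*}(du)^{2}\,d\mathfrak{m}\right)^{1/2}.
\]
Combining the two displays, cancelling $\big(\int_M u^{2}\,d\mathfrak{m}\big)^{1/2}$ and squaring yields ${\rm SCh}_{F}^{2}\le 4\kappa\,\big(\int_M F^{*}(du)^{2}\,d\mathfrak{m}\big)\big/\big(\int_M u^{2}\,d\mathfrak{m}\big)$; taking the infimum over $u$ and using $\kappa\ge 1$ gives $\frac{1}{4\kappa^{2}}{\rm SCh}_{F}^{2}\le\lambda_1(M)$.

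For the upper bound I would use Brooks‑type test functions tied to the volume growth. Fix $x_0$, $\rho=d(x_0,\cdot)$, and for $a>0$, $R>0$ set $u_{a,R}:=\big(e^{-a\rho}-e^{-aR}\big)_{+}$, a compactly supported Lipschitz function supported in $B_R^{+}(x_0)$; on $B_R^{+}(x_0)$ one has $du_{a,R}=-a\,e^{-a\rho}\,d\rho$, so $F^{*}(du_{a,R})=a\,e^{-a\rho}\,F^{*}(-d\rho)\le\sqrt{\kappa}\,a\,e^{-a\rho}$, hence $F^{*}(du_{a,R})^{2}\le\kappa\,a^{2}\,e^{-2a\rho}$. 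By the layer‑cake formula and Definition \ref{def-Log}, $\int_M e^{-2a\rho}\,d\mathfrak{m}=2a\int_0^{\infty}e^{-2ar}\,\mathfrak{m}\big(B_r^{+}(x_0)\big)\,dr$ is finite and positive whenever $2a>{\rm VE}_{F}$; for such $a$, both $\int_{B_R^{+}}e^{-2a\rho}\,d\mathfrak{m}$ and $\int_{B_R^{+}}\big(e^{-a\rho}-e^{-aR}\big)^{2}\,d\mathfrak{m}$ converge to $\int_M e^{-2a\rho}\,d\mathfrak{m}$ as $R\to\infty$, so $\limsup_{R\to\infty}$ of the Rayleigh quotient of $u_{a,R}$ is at most $\kappa a^{2}$, giving $\lambda_1(M)\le\kappa a^{2}$ for every $a>\tfrac12{\rm VE}_{F}$. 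Letting $a\downarrow\tfrac12{\rm VE}_{F}$ yields $\lambda_1(M)\le\tfrac{\kappa}{4}{\rm VE}_{F}^{2}\le\tfrac{\kappa^{2}}{4}{\rm VE}_{F}^{2}$, again since $\kappa\ge1$, which completes \eqref{inq-lamd-L}.

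The routine steps — positive homogeneity of $F^{*}$, the layer‑cake identities, Cauchy–Schwarz, and the $R\to\infty$ passages — are harmless. The main obstacle is the Finslerian co‑area/perimeter bookkeeping: one must justify the co‑area inequality with the \emph{reversed} co‑metric $F^{*}(-df)$ so that it is compatible with the forward Minkowski content $\mathfrak{m}^{+}$ of \eqref{Mcontent}, and one must track the direction in which each reversibility factor $\sqrt{\kappa}$ enters; this asymmetry, absent in the Riemannian case, is precisely what produces the losses recorded by $\kappa$ on both sides of \eqref{inq-lamd-L}. A secondary difficulty is that, unlike in Theorem \ref{thm-isc}, no curvature hypothesis is available for the upper bound, so the integrability and convergence of the truncated exponentials must be extracted purely from the definition of ${\rm VE}_{F}$ as an exponential volume‑growth rate — which is exactly what the truncation $u_{a,R}$ together with the threshold $2a>{\rm VE}_{F}$ accomplishes.
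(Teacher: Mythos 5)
Your proposal is correct, and the two halves merit separate comments. For the lower bound you run essentially the same argument as the paper's Lemma~\ref{lower}: the co-area inequality of Lemma~\ref{coarea} applied to superlevel sets of $u^{2}$, the pointwise estimate $F^{*}(-d(u^{2}))\le 2\Lambda_{F}\,|u|\,F^{*}(du)\le 2\sqrt{\kappa}\,|u|\,F^{*}(du)$, and Cauchy--Schwarz; the paper splits $u$ into $u_{+}+u_{-}$ first, while you handle the two sign regions directly, but the mechanism (and the loss of one factor $\Lambda_{F}\le\sqrt{\kappa}$ from the mismatch between the forward Minkowski content and the reversed co-metric) is identical.

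For the upper bound your route is genuinely different from the paper's. The paper (Lemma~\ref{upper}) adapts Li's Lemma~24.3: it plugs the product test function $\phi\,e^{-\delta\kappa^{-1}\sqrt{\lambda_{1}}\,r}$ into the Rayleigh quotient, where $\phi$ is a Lipschitz cutoff, bounds the cross and quadratic terms using both uniform ellipticity estimates $F^{*2}(\eta)\le\kappa\,g^{*}(dr)(\eta,\eta)$ and $g^{*}(dr)(\eta,\eta)\le\tilde{\kappa}F^{*2}(\eta)$, absorbs a $\lambda_{1}$-term to the left side, and then takes logs, sending $R\to\infty$ and $\delta\to1$. This bootstrap costs two factors of $\kappa$ and yields $\lambda_{1}\le\frac{\kappa^{2}}{4}{\rm VE}_{F}^{2}$. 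You instead use Brooks' original test functions $u_{a,R}=(e^{-a\rho}-e^{-aR})_{+}$, observe that $\int_{M}e^{-2a\rho}\,d\mathfrak{m}<\infty$ whenever $2a>{\rm VE}_{F}$ by the layer-cake identity and the definition of ${\rm VE}_{F}$, and bound $F^{*}(du_{a,R})=ae^{-a\rho}F^{*}(-d\rho)\le\sqrt{\kappa}\,a\,e^{-a\rho}$ using $F^{*}(d\rho)=1$ and $\Lambda_{F^{*}}=\Lambda_{F}\le\sqrt{\kappa}$. Letting $R\to\infty$ and $a\downarrow\tfrac12{\rm VE}_{F}$ gives the sharper estimate $\lambda_{1}(M)\le\frac{\kappa}{4}{\rm VE}_{F}^{2}$, which implies the stated $\frac{\kappa^{2}}{4}{\rm VE}_{F}^{2}$ since $\kappa\ge1$. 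Your approach is more elementary (no $\delta$-bootstrap, no a priori assumption $\lambda_{1}\ge\frac{1}{4R^{2}}$), avoids the uniform-convexity constant $\tilde{\kappa}$ entirely, and improves the constant by a full factor of $\kappa$; one minor imprecision is that the dual metric $F^{*}$ satisfies \eqref{unisc} with the \emph{reciprocal} constants $\tilde{\kappa}=1/\kappa^{*}$ and $\tilde{\kappa}^{*}=1/\kappa$, not literally the same $\kappa,\kappa^{*}$, but this does not affect the bound $\Lambda_{F^{*}}\le\sqrt{\kappa}$ which is all you use.
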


Moreover, if $F$ satisfies ${\rm Ric}_{\infty}\geq 0$, the sharpness of (\ref{eqn-isc}) implies that  ${\rm SCh}_{F}={{\rm VE}_{F}}$. Hence we have the following theorem by Theorem \ref{thm-lamd-L}.

\begin{thm}
Let $(M, F, \mathfrak{m})$ be an $n$-dimensional complete non-compact Finsler metric measure manifold with uniform smoothness and uniform convexity. Assume that ${\rm Ric_{\infty}}\geq 0$. Then the first eigenvalue $\lambda_1$ satisfies
\beq
\frac{1}{4\kappa^{2}}{\rm SCh}_{F}^2\leq \lambda_{1}(M) \leq \frac{\kappa^2}{4}{\rm SCh}_{F}^2. \label{inq-lamd}
\eeq
\end{thm}

The inequality (\ref{inq-lamd}) can be regarded as the Cheeger-Buser type inequality for  the first eigenvalue of Finsler Laplacian in Finsler geometry.  When $F$ is a Riemannian metric ($\kappa=1$),   Cheeger-Buser type inequality (\ref{inq-lamd})  is sharp and implies that $\lambda_{1} =\frac{1}{4}{\rm SCh}_{F}^2$.  Naturally, we have the following important and interesting open problem in Finsler geometry.

\vskip 2mm
\noindent\textbf{Open problem:} Under what conditions about a Finsler metric measure manifold $(M,F,  \mathfrak{m})$, is the Cheeger-Buser type inequality  (\ref{inq-lamd})  sharp? Conversely, what geometric properties can we get for the Finsler metric measure  manifold $(M,F,  \mathfrak{m})$ from the sharpness of (\ref{inq-lamd})?

\vskip 2mm
This paper is organized as follows. In Section \ref{Introd}, we will give some necessary definitions, notations, and known results. Section \ref{Iso} is devoted to the isoperimetric inequality (\ref{eqn-isc}) and we will prove that it is sharp. Finally, we obtain the upper and lower bounds of the first eigenvalue and give the proof of Theorem \ref{thm-lamd-L} in Section \ref{Che}.

\section{Preliminaries}\label{Introd}
In this section, we briefly review some necessary definitions, notations and  fundamental results in Finsler geometry. For more details, please refer to \cite{BaoChern, ChernShen, OHTA, Shen1}.

\subsection{Finsler metric measure manifolds}

Let $M$ be an $n$-dimensional smooth manifold and  $TM$ be the tangent bundle on $M$. Denote the elements in $TM$ by $(x, y)$ with $y \in T_{x} M$. Let $T M_{0}:=TM \backslash\{0\}$ and $\pi: T M_{0} \rightarrow M$ be the natural projective map. The pull-back $\pi^{*} T M$ is an $n$-dimensional vector bundle on $T M_0$. A Finsler metric on manifold $M$ is a function $F: T M \longrightarrow[0, \infty)$  satisfying the following properties:
\ben
  \item [(1)] (Regularity) $F$ is $C^{\infty}$ on $TM_{0}$;
  \item [(2)] (Positive 1-homogeneity) $F(x,\lambda y)=\lambda F(x,y)$ for any $(x,y)\in TM$ and all $\lambda >0$;
  \item [(3)] (Strong convexity) $F$ is strongly convex, that is, the matrix $\left(g_{ij}(x,y)\right)=\left(\frac{1}{2}(F^{2})_{y^{i}y^{j}}\right)$ is positive definite for any nonzero $y\in T_{x}M$.
\een
Such a pair $(M,F)$ is called a Finsler manifold and $g:=g_{ij}(x,y)dx^{i}\otimes dx^{j}$ is called the fundamental tensor of $F$. For a non-vanishing vector field $V$ on $M$, one introduces the weighted Riemannian metric $g_V$ on $M$ given by
$$
g_V(y, w)=g_{ij}(x, V_x)y^i w^j
$$
for $y,\, w\in T_{x}M$. In particular, $g_{V}(V,V)=F^{2}(x, V)$ for any $V \in T_{x}M$.

We define the reverse metric $\overleftarrow{F} $ of $F$ by $\overleftarrow{F}(x, y):=F(x,-y)$ for all $(x, y) \in T M$. It is easy to see that $\overleftarrow{F}$ is also a Finsler metric on $M$. A Finsler metric $F$ on $M$ is said to be reversible if $\overleftarrow{F}(x, y)=F(x, y)$ for all $(x, y) \in T M$. Otherwise, we say $F$ is irreversible.

Given a Finsler structure $F$ on $M$,  there is a Finsler co-metric $F^{*}$ on $M$ which is non-negative function on the cotangent bundle $T^{*}M$ given by
\be
F^{*}(x, \xi):=\sup\limits_{y\in T_{x}M\setminus \{0\}} \frac{\xi (y)}{F(x,y)}, \ \ \forall \xi \in T^{*}_{x}M. \label{co-Finsler}
\ee
We call $F^{*}$ the dual Finsler metric of $F$.  For any vector $y\in T_{x}M\setminus \{0\}$, $x\in M$, the covector $\xi =g_{y}(y, \cdot)\in T^{*}_{x}M$ satisfies
\be
F(x,y)=F^{*}(x, \xi)=\frac{\xi (y)}{F(x,y)}. \label{shenF311}
\ee
Conversely, for any covector $\xi \in T_{x}^{*}M\setminus \{0\}$, there exists a unique vector $y\in T_{x}M\setminus \{0\}$ such that $\xi =g_{y}(y, \cdot)\in T^{*}_{x}M$ (Lemma 3.1.1, \cite{Shen1}). Naturally,  we define a map ${\cal L}: TM \rightarrow T^{*}M$ by
$$
{\cal L}(y):=\left\{
\begin{array}{ll}
g_{y}(y, \cdot), & y\neq 0, \\
0, & y=0.
\end{array} \right.
$$
It follows from (\ref{shenF311}) that
$$
F(x,y)=F^{*}(x, {\cal L}(y)).
$$
Thus ${\cal L}$ is a norm-preserving transformation. We call ${\cal L}$ the Legendre transformation on Finsler manifold $(M, F)$.

Let
$$
g^{*kl}(x,\xi):=\frac{1}{2}\left[F^{*2}\right]_{\xi _{k}\xi_{l}}(x,\xi).
$$
For any $\xi ={\cal L}(y)$, we have
$$
g^{*kl}(x,\xi)=g^{kl}(x,y),
$$
where $\left(g^{kl}(x,y)\right)= \left(g_{kl}(x,y)\right)^{-1}$. If $F$ is uniformly smooth and uniformly convex with (\ref{usk}), then  $\left(g^{*ij}\right)$ is uniformly elliptic in the sense that there exist two constants $\tilde{\kappa}=(\kappa^*)^{-1}$, $\tilde{\kappa}^*=\kappa^{-1}$ such that for $x \in M, \ \xi \in T^*_x M \backslash\{0\}$ and $\eta \in T_x^* M$, we have \cite{OHTA}
\be
\tilde{\kappa}^* F^{* 2}(x, \eta) \leq g^{*i j}(x, \xi) \eta_i \eta_j \leq \tilde{\kappa} F^{* 2}(x, \eta). \label{unisc}
\ee

Given a smooth function $u$ on $M$, the differential $d u_x$ at any point $x \in M$, $d u_x=\frac{\partial u}{\partial x^i}(x) d x^i$ is a linear function on $T_x M$. We define the gradient vector $\nabla u(x)$ of $u$ at $x \in M$ by $\nabla u(x):=\mathcal{L}^{-1}(d u(x)) \in T_x M$. In a local coordinate system, we can express $\nabla u$ as
\be \label{nabna}
\nabla u(x)= \begin{cases}g^{* i j}(x, d u) \frac{\partial u}{\partial x^i} \frac{\partial}{\partial x^j}, & x \in M_u, \\ 0, & x \in M \backslash M_u,\end{cases}
\ee
where $M_{u}:=\{x \in M \mid d u(x) \neq 0\}$ \cite{Shen1}. In general, $\nabla u$ is only continuous on $M$, but smooth on $M_{u}$. Further,  for $u \in W^{1,2}(M)$, noting that $\nabla u$ is weakly differentiable, the Finsler Laplacian  $\Delta u$ is defined by
\be
\int_M \phi \Delta u d m:=-\int_{M} d \phi(\nabla u) dm  \label{Lap1}
\ee
for $\phi \in \mathcal{C}_{0}^{\infty}(M)$, where ${\cal C}^{\infty}_{0} (M)$ denotes the set of all smooth compactly supported functions on $M$  \cite{Shen1}.

Given a weakly differentiable function $u$ and a vector field $V$ which does not vanish on $M_u$, the weighted gradient and the weighted Laplacian of $u$ on the weighted Riemannian manifold $\left(M, g_{V}, \mathfrak{m}\right)$ are defined  by
$$
\nabla^V u:= \begin{cases}g^{ij}(x, V) \frac{\partial u}{\partial x^i} \frac{\partial}{\partial x^j} & \text { for } x \in M_u, \\ 0 & \text { for } x \notin M_u , \end{cases} \ \ \ \Delta^{V} u:= {\rm div}_{\mathfrak{m}}\left(\nabla^V u\right).
$$
We note that $\nabla^{\nabla u}u=\nabla u$ and $\Delta^{\nabla u} u= \Delta u$ \cite{OHTA}.

Let $(M, F, \mathfrak{m})$ be a Finsler metric measure manifold, $\Omega \subset M$ a domain with compact closure and nonempty boundary $\partial \Omega$. The first eigenvalue $\lambda_1(\Omega)$ of Finsler Laplacian on $\Omega$ is defined by {\cite{Shen1}}
$$
\lambda_1(\Omega)=\inf _{u \in H_{0}^{1,2}(\Omega)\setminus \{0\}} \frac{\int_{\Omega}\left(F^{*}(d u)\right)^2 d \mathfrak{m}}{\int_{\Omega} u^2 d \mathfrak{m}},
$$
where $H_{0}^{1,2}(\Omega)$ is the completion of $C_{0}^{\infty}(\Omega)$ in $W^{1,2}(\Omega)$ with respect to the norm
$$
\|u\|_{\Omega}=\|u\|_{L^{2}(\Omega)}+\frac{1}{2}\|F^{*}(d u)\|_{L^{2}(\Omega)}+\frac{1}{2}\|\overleftarrow{F}^{*}(d u)\|_{L^{2}(\Omega)}.
$$

If $\Omega_1 \subset \Omega_2$ are bounded domains, then $\lambda_1\left(\Omega_1\right) \geq \lambda_2\left(\Omega_2\right) \geq 0$. Thus, if $\Omega_1 \subset \Omega_2 \subset$ $\cdots \subset M$ are bounded domains so that $\cup \ \Omega_i=M$, then the following limit

$$
\lambda_1(M)=\lim _{i \rightarrow \infty} \lambda_1\left(\Omega_i\right) \geq 0
$$
exists, and it is independent of the choice of $\left\{\Omega_i\right\}$.

\vskip 2mm

For $x_1, x_2 \in M$, the distance from $x_1$ to $x_2$ is defined by
$$
d\left(x_1, x_2\right):=\inf _\gamma \int_0^1 F(\dot{\gamma}(t)) d t,
$$
where the infimum is taken over all $C^1$ curves $\gamma:[0,1] \rightarrow M$ such that $\gamma(0)=$ $x_1$ and $\gamma(1)=x_2$. Note that $d \left(x_1, x_2\right) \neq d \left(x_2, x_1\right)$ unless $F$ is reversible. Let $E \subset M$ be a bounded Borel set. Define the diameter of $E$ by
\[
{\rm diam} (E):=\sup\limits_{x_{1}, x_{2} \in E}\{d(x_{1}, x_{2})\}.
\]

A $C^{\infty}$-curve $\gamma:[0,1] \rightarrow M$ is called a geodesic  if $F(\gamma, \dot{\gamma})$ is constant and it is locally minimizing.
The exponential map $\exp _x: T_x M \rightarrow M$ is defined by $\exp _x(v)=\gamma(1)$ for $v \in T_x M$ if there is a geodesic $\gamma:[0,1] \rightarrow M$ with $\gamma(0)=x$ and $\dot{\gamma}(0)=v$. A Finsler manifold $(M, F)$ is said to be forward complete (resp. backward complete) if each geodesic defined on $[0, \ell)$ (resp. $(-\ell, 0])$ can be extended to a geodesic defined on $[0, \infty)$ (resp. $(-\infty, 0])$. We say $(M, F)$ is complete if it is both forward complete and backward complete. By Hopf-Rinow theorem on forward complete Finsler manifolds, any two points in $M$ can be connected by a minimal forward geodesic and the forward closed balls $\overline{B_R^{+}(p)}$ are compact (see \cite{BaoChern, Shen1}).  If the reversibility constant $\Lambda_F$ is finite, then the forward completeness is equivalent to the backward completeness.
In this case, we can simply speak of completeness without ambiguity \cite{OHTA}.

\subsection{${\rm Ric}_{\infty}\geq 0$ or ${\rm CD}(0, \infty)$ condition}

Let $(M, F, \mathfrak{m})$ be  an $n$-dimensional Finsler manifold $(M, F)$ equipped with a smooth measure $\mathfrak{m}$. Write the volume form $d \mathfrak{m}$ of  $\mathfrak{m}$ as $d \mathfrak{m} = \sigma(x) dx^{1} dx^{2} \cdots d x^{n}$. Define
\be\label{Dis}
\tau (x, y):=\ln \frac{\sqrt{{\rm det}\left(g_{i j}(x, y)\right)}}{\sigma(x)}.
\ee
We call $\tau$ the distortion of $F$. It is natural to study the rate of change of the distortion along geodesics. For a vector $y \in T_{x} M \backslash\{0\}$, let $ \gamma = \gamma(t)$ be the geodesic with $\gamma (0)=x$ and $\dot{\gamma}(0)=y.$  Set
\be
{\bf S}(x, y):= \frac{d}{d t}\left[\tau(\gamma (t), \dot{\gamma}(t))\right]|_{t=0}.
\ee
$\mathbf{S}$ is called the S-curvature of $F$ \cite{ChernShen}. Let $Y$ be a $C^{\infty}$ geodesic field on an open subset $U \subset M$ (i.e. all integral curves of $Y$ on $U$ are geodesics) and $\hat{g}=g_{Y}$ .  Let
\be
d \mathfrak{m}:=e^{- \psi} {\rm Vol}_{\hat{g}}, \ \ \ {\rm Vol}_{\hat{g}}= \sqrt{{det}\left(g_{i j}\left(x, Y_{x}\right)\right)}dx^{1} \cdots dx^{n}. \label{voldecom}
\ee
It is easy to see that $\psi$ is given by
$$
\psi (x)= \ln \frac{\sqrt{\operatorname{det}\left(g_{i j}\left(x, Y_{x}\right)\right)}}{\sigma(x)}=\tau\left(x, Y_{x}\right),
$$
which is just the distortion of $F$ along $Y_{x}$ at $x\in M$ \cite{ChernShen, Shen1}. Let $y := Y_{x}\in T_{x}M$ (that is, $Y$ is a geodesic extension of $y\in T_{x}M$). Then, by the definitions of the S-curvature, we have
\beqn
&&  {\bf S}(x, y)= Y[\tau(x, Y)]|_{x} = d \psi (y),  \\
&&  \dot{\bf S}(x, y)= Y[{\bf S}(x, Y)]|_{x} =y[Y(\psi)],
\eeqn
where $\dot{\bf S}(x, y):={\bf S}_{|m}(x, y)y^{m}$ and ``$|$" denotes the horizontal covariant derivative with respect to the Chern connection \cite{shen, Shen1}. Further, Ohta introduces the weighted Ricci curvatures in Finsler geometry in \cite{Ohta0} which are defined as follows \cite{Ohta0,OHTA}
\beq
{\rm Ric}_{N}(y)&=& {\rm Ric}(y)+ \dot{\bf S}(x, y) -\frac{{\bf S}(x, y)^{2}}{N-n},   \label{weRicci3}\\
{\rm Ric}_{\infty}(y)&=& {\rm Ric}(y)+ \dot{\bf S}(x, y). \label{weRicciinf}
\eeq
We say that Ric$_{N}\geq K$ for some $K\in \mathbb{R}$ if ${\rm Ric}_{N}(v)\geq KF^{2}(v)$ for all $v\in TM$, where $N\in \mathbb{R}\setminus \{n\}$ or $N= \infty$.

\vskip 3mm

Given a forward complete Finsler metric measure manifold $(M, F, \mathfrak{m})$. We denote by $\mathcal{P}(M)$ the space of all Borel probability measures on $M$.
For $p \in[1, \infty)$, we consider the space $\mathcal{P}^{p}(M) \subset \mathcal{P}(M)$ consisting of the measures  with finite $p$-th moment, that is, $\mu \in \mathcal{P}^{p}(M)$ if we have
$$
\int_{M}\{d^{p}(o, x)+d^{p}(x, o)\} \mu(d x)<\infty
$$
for some (hence for any) $ o \in M $.  For $\mu_{0}, \mu_{1}\in \mathcal{P}^{p}(M)$,  a probability measure $\pi \in \mathcal{P}(M \times M)$ on the product space is called  a coupling of $(\mu_{0}, \mu_{1})$ if  $\pi$  satisfies
$$
\pi(A \times M)=\mu_{0}(A), \quad \pi(M \times A)=\mu _{1}(A)
$$
for any measurable set $A \subset M$. We denote by $\Pi(\mu_{0}, \mu_{1})$ the set of all couplings of $(\mu_{0}, \mu_{1})$.
Now we define the $L^{p}$-Wasserstein distance $W_{p}$ from $\mu_0$ to $\mu_1$ by
\be
W_{p}\left(\mu_{0}, \mu_{1}\right):=\inf _{\pi \in \Pi(\mu_{0}, \mu_{1})}  \left(\int_{M \times M} d^{p}(x, z) \pi(d x d z)\right)^{\frac{1}{p}}. \label{Wadis}
\ee
We call $\left(\mathcal{P}^{p}(M), W_{p}\right)$ the $L^p$-Wasserstein space over $M$.  The  problem  to find and characterize a coupling $\pi$ of $(\mu_{0}, \mu_{1})$ attaining the infimum  in (\ref{Wadis}) is called the Monge-Kantorovich problem \cite{manini,OHTA}.

We call a  curve $\mu:[0,1] \rightarrow \mathcal{P}^{p}(M)$  in the Wasserstein space $\left(\mathcal{P}^{p}(M), W_p\right)$ the geodesic in $\left(\mathcal{P}^{p}(M), W_p\right)$  if $\mu$ satisfies
$$
W_{p}\left(\mu_{t}, \mu_s\right)=(s-t) W_{p}\left(\mu_{0}, \mu_{1}\right), \quad \forall \ 0 \leq t \leq s \leq 1 ,
$$
where $\mu_{t}:= \mu (t)$ for $t \in [0,1]$. It can be shown that if $\mu_0$ and $\mu_1$ are absolutely continuous, there exists a unique geodesic connecting $\mu_0$ to $\mu_1$  \cite{manini,OHTA}.
The relative entropy of an absolutely continuous measure $\mu=\rho \mathfrak{m}\in \mathcal{P}^{2}(M)$ is defined by
$$
{\rm Ent}_{\mathfrak{m}}(\mu):=\int_{M} \rho \log \rho \ d\mathfrak{m},
$$
where $\rho$ is the density function (the Radon-Nikodym derivative) of $\mu$ with respect to $\mathfrak{m}$.

The $\mathrm{CD}(K, N)$ condition for metric measure spaces was introduced in the seminal works of Sturm \cite{st1, st2} and Lott-Villani \cite{lot}, and later investigated in the realm of measured Finsler manifolds by Ohta (please refer to Chapter 18 of \cite{OHTA}). In particular, Ohta gave the following definition.

\begin{Def}[Curvature-Dimension Condition $\mathrm{CD}(K, \infty)$, \cite{OHTA}, Definition 18.5]
Let $(M, F, \mathfrak{m})$ be a forward complete Finsler metric measure manifold. We say that $(M, F, \mathfrak{m})$ satisfies the curvature-dimension condition  $\mathrm{CD}(K, \infty)$ for $K \in \mathbb{R}$ if, for any pair $\mu_{i}=\rho_{i} \mathfrak{m} \in \mathcal{P}^{2}(M) \ (i=0,1)$, we have
$$
\operatorname{Ent}_{\mathfrak{m}}\left(\mu_\lambda\right) \leq(1-\lambda) \operatorname{Ent}_{\mathfrak{m}}\left(\mu_0\right)+\lambda \operatorname{Ent}_{\mathfrak{m}}\left(\mu_1\right)-\frac{K}{2}(1-\lambda) \lambda W_{2}^{2}\left(\mu_{0}, \mu_{1}\right) .
$$
In particular, we say that $(M, F, \mathfrak{m})$ satisfies the curvature-dimension condition $\mathrm{CD}(0, \infty)$ if the relative entropy is convex along the geodesic of the Wasserstein space, that is, for all couples of absolutely continuous curves $\mu_{0}, \mu_{1}\in \mathcal{P}^{2}(M)$, it holds that
\be
{\rm Ent}_{\mathfrak{m}} (\mu_{t})\leq (1-t){\rm Ent}_{\mathfrak{m}} (\mu_{0})+t{\rm Ent}_{\mathfrak{m}} (\mu_{1}), \label{CD-con}
\ee
where $(\mu_{t})_{t\in[0,1]}$ is the unique geodesic connecting $\mu_{0}$ to $\mu_{1}$.
\end{Def}

\begin{thm}[ \cite{OHTA}, Theorem 18.6]
Let $(M, F, \mathfrak{m})$ be a forward or backward complete measured Finsler manifold of dimension $n \geq 2$, and take $K \in \mathbb{R}$. Then ${\rm Ric}_{\infty} \geq K$ holds if and only if $(M, F, \mathfrak{m})$ satisfies the curvature-dimension condition $\mathrm{CD}(K, \infty)$.
\end{thm}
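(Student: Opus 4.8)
The plan is to prove the two implications separately, transporting the Lott--Sturm--Villani and Bakry--\'Emery picture into the (possibly irreversible) Finsler setting. For ${\rm Ric}_\infty\geq K\Rightarrow\mathrm{CD}(K,\infty)$ I would argue via optimal transport. By a density approximation together with lower semicontinuity of ${\rm Ent}_\mathfrak{m}$ under narrow convergence, it suffices to establish the $K$-convexity inequality when $\mu_0=\rho_0\mathfrak{m}$, $\mu_1=\rho_1\mathfrak{m}$ have bounded, compactly supported densities. For such a pair the Finsler optimal transport theory of McCann--Ohta (cost $c=d^2/2$) provides a $c$-concave potential whose induced map $\mathcal{T}_t(x)=\gamma_x(t)$ transports $\mu_0$ to the unique $W_2$-geodesic $\mu_t=(\mathcal{T}_t)_\#\mu_0$, with $\gamma_x$ a minimal geodesic from $x$ to $\mathcal{T}_1x$ of constant speed $d(x,\mathcal{T}_1x)$ for $\mu_0$-a.e.\ $x$.

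The core of this direction is a Jacobian computation. Let $\mathbf{J}_t(x)$ denote the Jacobian of $\mathcal{T}_t$ with respect to $\mathfrak{m}$, which encodes the distortion of $F$ via the decomposition (\ref{voldecom}). The Monge--Amp\`ere identity $\rho_t(\mathcal{T}_tx)\,\mathbf{J}_t(x)=\rho_0(x)$ then gives
\[
{\rm Ent}_\mathfrak{m}(\mu_t)={\rm Ent}_\mathfrak{m}(\mu_0)-\int_M\log\mathbf{J}_t(x)\,\rho_0(x)\,d\mathfrak{m}(x),
\]
so the convexity statement reduces to the pointwise bound $\dfrac{d^2}{dt^2}\log\mathbf{J}_t(x)\leq-{\rm Ric}_\infty\!\big(\dot\gamma_x(t)\big)\leq-K\,F\!\big(\dot\gamma_x(t)\big)^2$ for $\mu_0$-a.e.\ $x$; integrating against $\rho_0\,d\mathfrak{m}$ and using the constant-speed property then reproduces the term $-\tfrac{K}{2}(1-\lambda)\lambda W_2^2(\mu_0,\mu_1)$. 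This pointwise bound is a Riccati comparison along $\gamma_x$ for the logarithmic derivative of the Jacobian: it is the ``$N=\infty$'' instance of the weighted comparison, in which the nonnegative squared term subtracted in (\ref{weRicci3}) is absent, so the curvature contribution is exactly ${\rm Ric}(\dot\gamma_x)+\dot{\mathbf S}(\dot\gamma_x)={\rm Ric}_\infty(\dot\gamma_x)$ from (\ref{weRicciinf}). Each step must be made rigorous with the standard regularity tools — semiconcavity of the potential, $\mathfrak{m}$-almost everywhere twice differentiability via Alexandrov's theorem, continuity (but not smoothness) of gradients — now in the presence of an asymmetric distance.

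For the converse $\mathrm{CD}(K,\infty)\Rightarrow{\rm Ric}_\infty\geq K$ I would argue by contraposition and localization. Assuming ${\rm Ric}_\infty(x_0,v)<K\,F^2(x_0,v)$ at some $(x_0,v)\in TM_0$, I would fix a short minimal geodesic through $x_0$ in direction $v$ and build $\mu_0,\mu_1$ as normalized restrictions of $\mathfrak{m}$ to thin tubes of transversal size $\varepsilon$ around the endpoints of a subsegment, chosen so that the optimal interpolation between them is, to leading order, the geodesic-flow image of $\mu_0$. Comparing ${\rm Ent}_\mathfrak{m}(\mu_{1/2})$ with $\tfrac12\big({\rm Ent}_\mathfrak{m}(\mu_0)+{\rm Ent}_\mathfrak{m}(\mu_1)\big)-\tfrac{K}{8}W_2^2(\mu_0,\mu_1)$ and expanding $\mathbf{J}_t$ along the segment to second order in Jacobi fields, the $\mathrm{CD}(K,\infty)$ inequality in the limit $\varepsilon\to0$ forces the opposite of the Riccati inequality at $(x_0,v)$, i.e.\ ${\rm Ric}_\infty(x_0,v)\geq K\,F^2(x_0,v)$, a contradiction.

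I expect the main obstacle to be the Jacobian/Riccati analysis in the forward direction for the asymmetric cost $c=d^2/2$: one has to set up the optimal map and its $\mathfrak{m}$-Jacobian when the potential is only semiconcave, and then identify the curvature term in the second variation of $\log\mathbf{J}_t$ with precisely ${\rm Ric}_\infty(\dot\gamma)$, which requires carrying the distortion along the interpolating geodesics. For the converse the delicate point is the quantitative control of the entropies of the degenerating test measures, where completeness and the existence of genuine short minimizers are used. In the reversible Riemannian case both halves collapse to the classical von Renesse--Sturm argument.
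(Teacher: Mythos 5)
This theorem is not proved in the paper; it is cited verbatim as Theorem~18.6 of Ohta's monograph \cite{OHTA}, with the original argument going back to Ohta's work on Finsler interpolation inequalities \cite{Ohta0} following Cordero-Erausquin--McCann--Schmuckenschl\"ager, von Renesse--Sturm, Sturm \cite{st1,st2}, and Lott--Villani \cite{lot}. Your sketch reproduces that reference's strategy faithfully: Monge--Amp\`ere along the displacement interpolation driven by a $d^2/2$-concave potential, the entropy identity $\operatorname{Ent}_{\mathfrak m}(\mu_t)=\operatorname{Ent}_{\mathfrak m}(\mu_0)-\int\log\mathbf J_t\,\rho_0\,d\mathfrak m$, a Riccati-type second-derivative estimate for $\log\mathbf J_t$ in the forward direction, and a von Renesse--Sturm localization with shrinking tubes for the converse. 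So there is nothing in the paper to compare against, and the outline is the standard one.

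One point worth tightening: you attribute the gain in the $N=\infty$ Riccati comparison to ``the nonnegative squared term subtracted in (\ref{weRicci3})'' being absent. That term, $\mathbf S^2/(N-n)$, lives in the definition of ${\rm Ric}_N$ itself; it is not the term you discard in the second-derivative estimate. What actually happens is that writing $\mathbf J_t$ via (\ref{voldecom}) as $\mathbf J_t=\det A(t)\,e^{-\psi(\gamma(t))}$ for the matrix Jacobi field $A(t)$ gives
\[
\frac{d^2}{dt^2}\log\mathbf J_t
= -\operatorname{Ric}(\dot\gamma_x(t))-\dot{\mathbf S}(\dot\gamma_x(t))-\operatorname{tr}\!\big((A'A^{-1})^2\big)
= -\operatorname{Ric}_\infty(\dot\gamma_x(t))-\operatorname{tr}\!\big((A'A^{-1})^2\big),
\]
and it is the trace term $\operatorname{tr}((A'A^{-1})^2)\ge 0$ --- nonnegative because $A'A^{-1}$ is self-adjoint for the optimal map, which in turn rests on the Alexandrov twice-differentiability you invoke --- that one simply drops when $N=\infty$. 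For finite $N$ one would not drop it but instead combine $\operatorname{tr}((A'A^{-1})^2)$ with the $\dot{\mathbf S}^2$ contribution via Cauchy--Schwarz, and this is exactly what the $\mathbf S^2/(N-n)$ correction in ${\rm Ric}_N$ is designed to absorb. With that mechanism stated correctly, and with the usual care that the computation involves $g_{\dot\gamma}$ (the metric pulled back along the interpolating geodesics, so the distortion $\psi=\tau(\cdot,\dot\gamma)$ is the one appearing in (\ref{Dis})--(\ref{voldecom})), your outline matches the proof in the cited source.
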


A natural and important result derived from the curvature-dimension condition  $\mathrm{CD}(K, \infty)$ (or ${\rm Ric}_{\infty} \geq K$)  is the Brunn-Minkowski inequality. Given two measurable subsets $A$ and $B$ of a $\mathrm{CD}(K, \infty)$ Finsler metric measure manifold $(M, F, \mathfrak{m})$ and $t \in[0,1]$, we define
$$
\begin{aligned}
Z_{t}(A, B) :&=\left\{\gamma_{t} \mid \gamma \text { is a minimal geodesic such that } \gamma_{0} \in A \text { and } \gamma_{1} \in B\right\} \\
& =\{z \mid \exists \ x_{0} \in A, z_{0} \in B \ \text{such that} \ \mathrm{d}(x_{0}, z)=t \mathrm{d}(x_{0}, z_{0}) \text{ and } \mathrm{d}(z, z_{0})=(1-t) \mathrm{d}(x_{0}, z_{0})\}.
\end{aligned}
$$
An almost immediate consequence of the above definition is the Brunn-Minkowski inequality on Finsler metric measure manifolds satisfying the ${\rm CD}(K, \infty)$ condition.
\begin{lem} [Brunn-Minkowski Inequality, \cite{OHTA}, Theorem 18.8] Let $(M, F, \mathfrak{m})$  be forward or backward complete and satisfy $\operatorname{Ric}_{\infty} \geq K$ for some $K \in \mathbb{R}$. Take two Borel sets $A, B \subset M$ with positive masses. Then we have
$$
\begin{aligned}
& \log \mathfrak{m}\left(Z_{t}(A, B)\right) \\
& \geq (1-t) \log \mathfrak{m}(A)+t \log \mathfrak{m}(B)+\frac{K}{2}t (1-t)  W_{2}^{2}\left(\frac{\mathbf{1}_A}{\mathfrak{m}(A)} \mathfrak{m}, \frac{\mathbf{1}_B}{\mathfrak{m}(B)} \mathfrak{m}\right)
\end{aligned}
$$
for all $t \in (0,1)$, where $\mathbf{1}_A$ denotes the characteristic function of $A$. In particular, if $(M, F, \mathfrak{m})$ satisfies  $\operatorname{Ric}_{\infty} \geq 0$, we have
\be
\log \mathfrak{m}\left(Z_t(A, B)\right)\geq (1-t) \log \mathfrak{m}(A)+t \log \mathfrak{m}(B), \quad t \in(0,1). \label{B-M-ine}
\ee
\end{lem}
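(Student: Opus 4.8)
The strategy is to deduce the inequality from the curvature--dimension condition $\mathrm{CD}(K,\infty)$, which by the theorem quoted above is equivalent to $\operatorname{Ric}_{\infty}\ge K$. First I would record two reductions. On the one hand, $\mathfrak{m}(Z_t(A,B))$ is well defined: $Z_t(A,B)$ is the image under the evaluation map $e_t\colon\gamma\mapsto\gamma(t)$ of the set of minimal geodesics $\gamma\colon[0,1]\to M$ with $\gamma(0)\in A$ and $\gamma(1)\in B$, and this set is Borel in $C([0,1],M)$ (the minimal geodesics form a closed set, and the conditions $\gamma(0)\in A$, $\gamma(1)\in B$ are preimages of Borel sets under the continuous maps $e_0,e_1$), so $Z_t(A,B)$ is analytic, hence universally measurable. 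On the other hand, it suffices to treat bounded $A,B$: writing $A=\bigcup_j A_j$, $B=\bigcup_j B_j$ with $A_j=A\cap\overline{B_j^{+}(o)}$, $B_j=B\cap\overline{B_j^{+}(o)}$, one has $Z_t(A_j,B_j)\subseteq Z_t(A,B)$ and $\mathfrak{m}(A_j)\uparrow\mathfrak{m}(A)$, $\mathfrak{m}(B_j)\uparrow\mathfrak{m}(B)$, so the general statement follows from the bounded case by monotone convergence (when $K=0$ this is immediate; when $K\ne 0$ one also checks that $\tfrac{K}{2}t(1-t)W_2^2$ passes to the limit, the remaining degenerate situations --- e.g. $\mu_0,\mu_1\notin\mathcal{P}^2(M)$ with $K<0$ --- making the right-hand side $-\infty$).

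So assume $A,B$ bounded with positive mass and put $\mu_0:=\mathfrak{m}(A)^{-1}\mathbf{1}_A\,\mathfrak{m}$ and $\mu_1:=\mathfrak{m}(B)^{-1}\mathbf{1}_B\,\mathfrak{m}$; these are absolutely continuous elements of $\mathcal{P}^2(M)$. By the existence and uniqueness of Wasserstein geodesics between absolutely continuous measures recalled above, there is a unique geodesic $(\mu_t)_{t\in[0,1]}$ in $(\mathcal{P}^2(M),W_2)$ from $\mu_0$ to $\mu_1$, and it is realized as a displacement interpolation: $\mu_t=(e_t)_{\#}\Pi$ for an optimal dynamical plan $\Pi$ supported on minimal geodesics $\gamma$ with $\gamma(0)\in A$ ($\mu_0$-a.e.) and $\gamma(1)\in B$ ($\mu_1$-a.e.). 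In particular $\mu_t$ is concentrated on $Z_t(A,B)$, i.e. $\mu_t(Z_t(A,B))=1$, and by $\mathrm{CD}(K,\infty)$ it is absolutely continuous with finite relative entropy.

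Now apply the $\mathrm{CD}(K,\infty)$ inequality along this geodesic,
\[
\operatorname{Ent}_{\mathfrak{m}}(\mu_t)\ \le\ (1-t)\operatorname{Ent}_{\mathfrak{m}}(\mu_0)+t\operatorname{Ent}_{\mathfrak{m}}(\mu_1)-\frac{K}{2}\,t(1-t)\,W_2^2(\mu_0,\mu_1),
\]
and evaluate the three entropies. Directly, $\operatorname{Ent}_{\mathfrak{m}}(\mu_0)=-\log\mathfrak{m}(A)$ and $\operatorname{Ent}_{\mathfrak{m}}(\mu_1)=-\log\mathfrak{m}(B)$. For the left-hand side, if $\mu=\rho\,\mathfrak{m}$ is any probability measure concentrated on a set $S$ with $0<\mathfrak{m}(S)<\infty$, then Jensen's inequality for the convex function $u\mapsto u\log u$ with respect to the probability measure $\mathfrak{m}(S)^{-1}\mathbf{1}_S\,\mathfrak{m}$ gives
\[
\frac{1}{\mathfrak{m}(S)}\int_S\rho\log\rho\,d\mathfrak{m}\ \ge\ \Bigl(\frac{1}{\mathfrak{m}(S)}\int_S\rho\,d\mathfrak{m}\Bigr)\log\Bigl(\frac{1}{\mathfrak{m}(S)}\int_S\rho\,d\mathfrak{m}\Bigr)=\frac{1}{\mathfrak{m}(S)}\log\frac{1}{\mathfrak{m}(S)},
\]
hence $\operatorname{Ent}_{\mathfrak{m}}(\mu)=\int_S\rho\log\rho\,d\mathfrak{m}\ge-\log\mathfrak{m}(S)$; taking $S=Z_t(A,B)$ (bounded, so of finite measure) yields $\operatorname{Ent}_{\mathfrak{m}}(\mu_t)\ge-\log\mathfrak{m}(Z_t(A,B))$. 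Chaining the displays and multiplying by $-1$ gives
\[
\log\mathfrak{m}(Z_t(A,B))\ \ge\ (1-t)\log\mathfrak{m}(A)+t\log\mathfrak{m}(B)+\frac{K}{2}\,t(1-t)\,W_2^2(\mu_0,\mu_1),
\]
which is the claim; the case $\operatorname{Ric}_{\infty}\ge 0$ is the specialization $K=0$.

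The entropy identities for $\mu_0,\mu_1$ and the Jensen bound are routine; the substantive step --- and the main obstacle --- is the structural claim in the second paragraph that the $W_2$-geodesic between the normalized restrictions is an honest displacement interpolation supported in $Z_t(A,B)$. This relies on the Finsler version of McCann's theorem (the optimal transport from $\mu_0$ being induced by a map $x\mapsto\exp_x(\nabla\varphi(x))$ for a $c$-concave potential $\varphi$, with $\mu_t$ the push-forward of $\mu_0$ by $x\mapsto\exp_x(t\,\nabla\varphi(x))$) and on the absolute continuity of the interpolants, both available on forward (or backward) complete Finsler metric measure manifolds. A secondary technical point, already flagged above, is the measurability of $Z_t(A,B)$ together with the monotone limiting argument reducing the general Borel case to the bounded one, including the passage to the limit in the Wasserstein term when $K\ne 0$.
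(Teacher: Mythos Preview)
The paper does not provide its own proof of this lemma; it is quoted as a known result from Ohta's monograph \cite{OHTA}, Theorem 18.8, and used as a black box in the proof of Theorem~\ref{thm-isc}. Your argument is correct and is precisely the standard derivation of the Brunn--Minkowski inequality from the $\mathrm{CD}(K,\infty)$ condition (uniform measures on $A$ and $B$, displacement interpolation supported in $Z_t(A,B)$, entropy convexity, and the Jensen bound $\operatorname{Ent}_{\mathfrak{m}}(\mu_t)\ge -\log\mathfrak{m}(Z_t(A,B))$), which is how the cited reference proves it.
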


\section{Isoperimetric inequality}\label{Iso}

This section is devoted to the proof of  Theorem \ref{thm-isc}.

\begin{proof}[Proof of Theorem \ref{thm-isc}]
Let $E \subset M$ be a bounded Borel set and $d:= {\rm diam}(E)$. Obviously, $d <\infty$. Let $x_{0}\in E$ and fix arbitrarily a $R>0$. Then, for any $t\in [0,1]$, we claim that
\be
Z_{t}(E, B^{+}_{R}(x_0))\subset B^{+}(E, t(d+R)), \label{eqn-zt}
\ee
where $B^{+}(E, \varepsilon)$ is the forward $\varepsilon$-neighborhood of $E$ for $\varepsilon >0$. Indeed, if  $z\in Z_{t}(E, B^{+}_{R}(x_0))$, then by the definition, there exist $x\in E$ and $z_{0}\in B^{+}_{R}(x_0)$ such that $d(x,z)=td(x, z_{0})$. By the triangular inequality, we have that $d(E, z)\leq d(x, z)= td(x, z_{0})\leq t(d(x, x_{0})+d(x_{0}, z_{0}))<t(d+R)$. Thus, $z\in B^{+}(E, t(d+R))$, which means (\ref{eqn-zt}).

From (\ref{eqn-zt}) and Brunn-Minkowski inequality (\ref{B-M-ine}), we have that
\be
\log \mathfrak{m}(B^{+}(E, t(d+R)))\geq\log \mathfrak{m}(Z_{t}(E, B^{+}_{R}(x_0)))\geq (1-t)\log \mathfrak{m}(E)+t\log \mathfrak{m}(B^{+}_{R}(x_0)).\label{eqn-mt}
\ee

Now we are in the position to compute the Minkowski content $\mathfrak{m}^{+}(E)$. We just consider the case that $\mathfrak{m}^{+}(E)<\infty$, otherwise (\ref{eqn-isc}) is trivial. This implies $\lim\limits_{\varepsilon\rightarrow 0}\mathfrak{m}(B^{+}(E, \varepsilon))=\mathfrak{m}(E)$. Then, by the derivative formula $(s^{\frac{1}{n}})^{\prime}|_{s= s_{0}}= \frac{1}{n}s_{0}^{\frac{1}{n}-1}$, we find that
$$
\mathfrak{m}(E)^{\frac{1}{n}-1}=\lim\limits_{\varepsilon\rightarrow 0^{+}} \ n \cdot \frac{\mathfrak{m}(B^{+}(E, \varepsilon))^{\frac{1}{n}}-\mathfrak{m}(E)^{\frac{1}{n}}}{\mathfrak{m}(B^{+}(E, \varepsilon))-\mathfrak{m}(E)}.
$$
Thus, by using the definition of the Minkowski content we have that
$$
\frac{\mathfrak{m}^{+}(E)}{\mathfrak{m}(E)^{1-\frac{1}{n}}}
= \liminf\limits_{\varepsilon\rightarrow 0^{+}} \ n \frac{\mathfrak{m}(B^{+}(E, \varepsilon))^{\frac{1}{n}}-\mathfrak{m}(E)^{\frac{1}{n}}}{\varepsilon}.
$$
Replacing $\varepsilon$ with $t(d+R)$ and by (\ref{eqn-mt}), we obtain that
$$
\frac{\mathfrak{m}^{+}(E)}{\mathfrak{m}(E)^{1-\frac{1}{n}}}\geq\liminf\limits_{t\rightarrow 0^{+}} \ n \frac{\mathfrak{m}(E)^{\frac{1-t}{n}} \cdot \mathfrak{m}(B^{+}_{R}(x_0))^{\frac{t}{n}}-\mathfrak{m}(E))^{\frac{1}{n}}}{t(d+R)}.
$$
By L'H\^{o}pital's rule, we have
$$
\liminf\limits_{t\rightarrow 0^{+}} \ n \frac{\mathfrak{m}(E)^{\frac{1-t}{n}} \cdot \mathfrak{m}(B^{+}_{R}(x_0))^{\frac{t}{n}}-\mathfrak{m}(E))^{\frac{1}{n}}}{t(d+R)}=\mathfrak{m}(E)^{\frac{1}{n}} \frac{\log \mathfrak{m}(B^{+}_{R}(x_0))-\log\mathfrak{m}(E)}{d+R}.
$$
Thus,
$$
\frac{\mathfrak{m}^{+}(E)}{\mathfrak{m}(E)}\geq \frac{\log \mathfrak{m}(B^{+}_{R}(x_0))-\log\mathfrak{m}(E)}{d+R}.
$$
By the arbitrariness of $R>0$ and letting $R\rightarrow\infty$ on the right hand side, we can obtain $\mathfrak{m}^{+}(E)\geq {\rm VE}_{F} \cdot \mathfrak{m}(E)$.

Next, we prove the sharpness of (\ref{eqn-isc}) by contradiction. Assume that there is a constant $C>{\rm VE}_{F}$ such that
\beq
\mathfrak{m}^{+}(E)\geq C \cdot \mathfrak{m}(E)\label{ine-sh1}
\eeq
for all bounded Borel subset $E\subset M$. To obtain the contradiction, we fix arbitrarily a $R>0$ and choose any $0<r<R$. For any $\delta>0$ and $x_{0}\in M$, if $z\in Z_{t}(B^{+}_{r}(x_{0}), B^{+}_{R+\delta}(x_{0}))$, then by the definition, there exist $x_{1}\in B^{+}_{r}(x_{0})$ and $z_{1}\in B^{+}_{R+\delta}(x_{0})$ such that $d(x_{1}, z)=t d(x_{1}, z_{1})$, and then
$$
d(x_{0}, z)\leq d(x_{0}, x_{1})+d(x_{1}, z)=d(x_{0}, x_{1})+ t d(x_{1}, z_{1})<r+ t(d+ R+\delta),
$$
where $d:={\rm diam}(B^{+}_{r}(x_{0}))$. Hence, for $t=\frac{R-r}{R+d+\delta}$, we have $Z_{t}(B^{+}_{r}(x_{0}), B^{+}_{R+\delta}(x_{0}))\subset B^{+}_{R}(x_{0})$. Then it holds from Brunn-Minkowski inequality (\ref{B-M-ine}) that
\beq
\log \mathfrak{m}(B^{+}_{R}(x_{0}))\geq \frac{r+d+\delta}{R+d+\delta}\log \mathfrak{m}(B^{+}_{r}(x_{0}))+ \frac{R-r}{R+d+\delta} \log \mathfrak{m}(B^{+}_{R+\delta}(x_{0})).\label{ine-sh2}
\eeq
Multiplying both sides of (\ref{ine-sh2}) by $\frac{R+d+\delta}{R-r}$, we have
\beq
\log \mathfrak{m}(B^{+}_{R+\delta}(x_{0}))-\log \mathfrak{m}(B^{+}_{R}(x_{0})) \leq \frac{r+d+\delta}{R-r}\left(\log \mathfrak{m}(B^{+}_{R}(x_{0}))-\log \mathfrak{m}(B^{+}_{r}(x_{0}))\right).\label{ine-sh3}
\eeq
Since $B^{+}(B^{+}_{R}(x_{0}), \delta)\subset B^{+}_{R+\delta}(x_{0})$, then, by using the definition of the Minkowski content and the derivative formula $(\log s)^{\prime}|_{s= s_{0}}= \frac{1}{s_{0}}$, we have the following
\beqn
C \leq \frac{\mathfrak{m}^{+}(B^{+}_{R}(x_{0}))}{\mathfrak{m}(B^{+}_{R}(x_{0}))}&=& \liminf\limits_{\delta\rightarrow 0^{+}} \frac{\log \mathfrak{m}(B^{+}(B^{+}_{R}(x_{0}), \delta))-\log \mathfrak{m}(B^{+}_{R}(x_{0}))}{\mathfrak{m}(B^{+}(B^{+}_{R}(x_{0}), \delta))-\mathfrak{m}(B^{+}_{R}(x_{0}))} \cdot  \frac{ \mathfrak{m}(B^{+}(B^{+}_{R}(x_{0}),\delta))-\mathfrak{m}(B^{+}_{R}(x_{0}))}{\delta}\\
&\leq &\liminf\limits_{\delta\rightarrow 0^{+}} \frac{\log \mathfrak{m}(B^{+}_{R+\delta}(x_{0}))-\log \mathfrak{m}(B^{+}_{R}(x_{0}))}{\delta}.
\eeqn
Then there exists a $\delta>0$ such that
\begin{equation}\label{ine-sh4}
\delta  C \leq \log \mathfrak{m}(B^{+}_{R+\delta}(x_{0}))-\log \mathfrak{m}(B^{+}_{R}(x_{0})).
\end{equation}
Combining (\ref{ine-sh3}) with (\ref{ine-sh4}) yields
\begin{equation*}
C \leq \frac{r+d+\delta}{\delta} \ \frac{\log \mathfrak{m}(B^{+}_{R}(x_{0}))-\log \mathfrak{m}(B^{+}_{r}(x_{0}))}{R-r}.
\end{equation*}
Due to the arbitrariness of $R>0$ and $0<r<R$, by letting $R\rightarrow\infty$ firstly and then letting $r\rightarrow 0^{+}$ (which implies $d\rightarrow 0^{+}$) in the above inequality, we can conclude that
$$
C \leq {\rm VE}_{F}<C,
$$
which is a contradiction. Then we have proven the sharpness of (\ref{eqn-isc}). This completes the proof.
\end{proof}

\section{Second Cheeger constant and Cheeger-Buser type inequality}\label{Che}
In this section, we  will prove Theorem \ref{thm-lamd-L}. In order to determine the lower bound of the first eigenvalue of Finsler Laplacian, we need the following co-area formula with respect to $\mathfrak{m}^{+}(E)$ for $E\subset M$ (please refer to the proof of Lemma B.2 and Lemma B.3 in \cite{manini}).

\begin{lem}{\rm (\cite{manini})}\label{coarea}
Let $(M, F,\mathfrak{m})$ be an $n$-dimensional Finsler metric measure manifold. If $f\in C_{0}^{1}(M)$, it holds that
\be
\int_{0}^{\infty}\mathfrak{m}^{+}(\{f\geq t\}) dt \leq \int_{M} F^{*}(-df)d\mathfrak{m}. \label{co-area}
\ee
\end{lem}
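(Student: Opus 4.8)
The statement to prove is Lemma~\ref{coarea} (the co-area-type inequality for $f\in C_0^1(M)$); I outline a plan.

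\medskip

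\noindent\textbf{Plan of proof.} The idea is to reduce the inequality to a pointwise/infinitesimal comparison between the Minkowski content of superlevel sets and the integral of $F^*(-df)$, following the scheme of Manini \cite{manini}. First I would set $\Omega_t:=\{f\ge t\}$ for $t\ge 0$; since $f\in C_0^1(M)$ these are compact, nested, and $\mathfrak{m}(\Omega_t)$ is finite and non-increasing in $t$. The key step is to establish, for a.e.\ $t>0$, the estimate
\be
\mathfrak{m}^{+}(\Omega_t)\le \liminf_{\varepsilon\to 0^+}\frac{\mathfrak{m}(\{t-\varepsilon \le f < t\})}{\varepsilon \cdot \inf\{F^*(-df)\ \text{near}\ \{f=t\}\}}\ \text{-type bound},
\ee
more precisely a bound of the form $\mathfrak{m}^+(\Omega_t)\,\varepsilon \lesssim \int_{\{t-\varepsilon\le f<t\}}F^*(-df)\,d\mathfrak{m}$ up to lower-order terms. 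The geometric input is that the forward $\varepsilon$-neighborhood $B^+(\Omega_t,\varepsilon)$ is contained in $\{f > t - L_\varepsilon\}$ where $L_\varepsilon\to 0$ controls how much $f$ can decrease along a forward geodesic of length $\varepsilon$ leaving $\Omega_t$; quantitatively, if $z\in B^+(\Omega_t,\varepsilon)$ then there is $x$ with $f(x)\ge t$ and $d(x,z)<\varepsilon$, and integrating $df$ along a near-minimal geodesic from $x$ to $z$ gives $f(z)\ge t-\int_0^\varepsilon F^*(-df)(\gamma(s))\,ds + o(\varepsilon)$ by the definition \eqref{co-Finsler} of $F^*$ and positive $1$-homogeneity. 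This is the heart of the argument and the main obstacle: making the $o(\varepsilon)$ uniform and handling the non-reversibility (which is exactly why $F^*(-df)$, not $F^*(df)$, appears — the forward neighborhood pushes in the $-\nabla$-direction).

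\medskip

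Given that infinitesimal comparison, I would then integrate in $t$. Using $B^+(\Omega_t,\varepsilon)\setminus \Omega_t \subset \{t-L_\varepsilon(z)\le f < t\}$ and Fubini/the layer-cake formula, one gets
\be
\int_0^\infty \frac{\mathfrak{m}(B^+(\Omega_t,\varepsilon))-\mathfrak{m}(\Omega_t)}{\varepsilon}\,dt \ \le\ \int_M F^*(-df)\,d\mathfrak{m} + (\text{error} \to 0),
\ee
because the double integral of the indicator of $\{t-L_\varepsilon(z)\le f(z)<t\}$ over $(t,z)$, divided by $\varepsilon$, converges to $\int_M \frac{L_\varepsilon(z)}{\varepsilon}\,d\mathfrak{m}(z)\to \int_M F^*(-df)\,d\mathfrak{m}$ as $\varepsilon\to 0^+$. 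Then I apply Fatou's lemma to pass the $\liminf_{\varepsilon\to 0^+}$ inside the $t$-integral, which yields $\int_0^\infty \mathfrak{m}^+(\Omega_t)\,dt \le \liminf_{\varepsilon\to0^+}\int_0^\infty \frac{\mathfrak{m}(B^+(\Omega_t,\varepsilon))-\mathfrak{m}(\Omega_t)}{\varepsilon}\,dt \le \int_M F^*(-df)\,d\mathfrak{m}$, which is \eqref{co-area}.

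\medskip

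\noindent\textbf{Main difficulty and fallback.} The delicate points are: (i) the uniformity of the first-order Taylor expansion of $f$ along forward geodesics emanating from $\Omega_t$, which requires $f\in C^1$ and compactness of $\operatorname{supp}f$ so that $df$ is uniformly continuous and $F^*$ is bounded on the relevant compact set; and (ii) controlling measurability of $t\mapsto \mathfrak{m}^+(\Omega_t)$ and the legitimacy of Fubini, which is where one invokes the structure used in Lemma~B.2 and Lemma~B.3 of \cite{manini}. Since the statement explicitly cites \cite{manini} for exactly these two lemmas, the cleanest route is to quote that construction: reduce \eqref{co-area} to their Lemma~B.2 (the neighborhood containment and infinitesimal estimate) and Lemma~B.3 (the integration in $t$ with Fatou), and fill in only the Finsler-specific verification that $F^*(-df)$ is the correct integrand because of the forward orientation of the Minkowski content. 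I expect step (i) to be the genuine obstacle; everything after it is bookkeeping with Fatou and Fubini.
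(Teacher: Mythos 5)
Your plan is correct and follows the standard route behind the cited Lemmas~B.2--B.3 of \cite{manini}: for each $t$, the shell $B^{+}(\{f\ge t\},\varepsilon)\setminus\{f\ge t\}$ sits inside $\{z: f(z)<t\le \sup_{d(x,z)<\varepsilon}f(x)\}$, whose $t$-fiber over $z$ has length $L_\varepsilon(z)$ with $L_\varepsilon(z)/\varepsilon\to F^{*}(-df)(z)$ uniformly on a compact neighborhood of $\operatorname{supp}f$ (using $df(\dot\gamma)\ge -F^{*}(-df)\,F(\dot\gamma)$ along forward geodesics from $\{f\ge t\}$ plus uniform continuity of $F^{*}(-df)$), after which Fubini in $t$ and Fatou along a sequence $\varepsilon_k\to 0^{+}$ finish the job. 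The paper gives no proof of this lemma (it simply cites \cite{manini}), so there is nothing further to compare against; your reconstruction is consistent with that reference, the only blemish being the heuristic display with $\inf F^{*}(-df)$ in the denominator, which is ill-posed where that infimum vanishes but plays no role in your actual Fubini/Fatou argument.
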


\vskip 2mm

By Lemma \ref{coarea}, we can first prove the following result.

\begin{lem}\label{lower}
Let $(M,F, \mathfrak{m})$ be an $n$-dimensional complete non-compact Finsler metric measure manifold with finite reversibility $\Lambda_{F}$. Then the first eigenvalue satisfies
\be
\lambda_1(M)\geq \frac{1}{4\Lambda_{F}^{2}}{\rm SCh}_{F}^{2}.\label{lower-1}
\ee
\end{lem}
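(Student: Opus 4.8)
The plan is to run the classical Cheeger argument, with the ordinary co-area formula replaced by the one-sided co-area inequality of Lemma~\ref{coarea}, keeping careful track of the irreversibility of $F$. First I would reduce to smooth test functions: since $H_0^{1,2}(\Omega)$ is by definition the completion of $C_0^\infty(\Omega)$, and both the numerator $\int_\Omega(F^*(du))^2\,d\mathfrak{m}$ and the denominator $\int_\Omega u^2\,d\mathfrak{m}$ of the Rayleigh quotient are continuous with respect to the norm $\|\cdot\|_\Omega$ (using subadditivity of the Minkowski norm $F^*$ and $\Lambda_F<\infty$), it suffices to prove
\[
\int_M (F^*(du))^2\,d\mathfrak{m}\ \geq\ \frac{1}{4\Lambda_F^2}\,{\rm SCh}_F^2\int_M u^2\,d\mathfrak{m}
\]
for every $u\in C_0^\infty(\Omega)$, where $\Omega$ runs over a bounded exhaustion $\Omega_1\subset\Omega_2\subset\cdots$ of $M$; such an exhaustion exists because completeness together with $\Lambda_F<\infty$ makes the closed forward balls compact (Hopf--Rinow). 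Taking the infimum over $u$ and then the limit $\lambda_1(M)=\lim_i\lambda_1(\Omega_i)$ then gives the lemma.

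Next, fixing such a $u$, I would apply the co-area inequality to $f:=u^2\in C_0^\infty(M)$. For every $t>0$ the superlevel set $\{f\geq t\}$ is a bounded Borel set, being contained in the compact set $\operatorname{supp} u$, so the definition of ${\rm SCh}_F$ gives $\mathfrak{m}^+(\{f\geq t\})\geq {\rm SCh}_F\,\mathfrak{m}(\{f\geq t\})$ (this also holds trivially when $\mathfrak{m}(\{f\geq t\})=0$ or the set is empty). Integrating over $t\in(0,\infty)$, using the layer-cake identity $\int_0^\infty\mathfrak{m}(\{f\geq t\})\,dt=\int_M f\,d\mathfrak{m}=\int_M u^2\,d\mathfrak{m}$, and then invoking Lemma~\ref{coarea}, I obtain
\[
{\rm SCh}_F\int_M u^2\,d\mathfrak{m}\ \leq\ \int_0^\infty\mathfrak{m}^+(\{f\geq t\})\,dt\ \leq\ \int_M F^*(-df)\,d\mathfrak{m}.
\]

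It then remains to estimate the right-hand side. Since $df=2u\,du$ and $F^*$ is positively $1$-homogeneous, one has $F^*(-df)=2u\,F^*(-du)$ on $\{u>0\}$, $F^*(-df)=2|u|\,F^*(du)$ on $\{u<0\}$, and $F^*(-df)=0$ on $\{u=0\}$; and directly from the definition of $\Lambda_F$ one has $F^*(-du)=\overleftarrow{F}^{*}(du)\leq\Lambda_F F^*(du)$. Hence $F^*(-df)\leq 2\Lambda_F\,|u|\,F^*(du)$ pointwise, so the Cauchy--Schwarz inequality yields
\[
\int_M F^*(-df)\,d\mathfrak{m}\ \leq\ 2\Lambda_F\int_M |u|\,F^*(du)\,d\mathfrak{m}\ \leq\ 2\Lambda_F\Big(\int_M u^2\,d\mathfrak{m}\Big)^{1/2}\Big(\int_M (F^*(du))^2\,d\mathfrak{m}\Big)^{1/2}.
\]
Combining this with the previous display, cancelling a factor $\big(\int_M u^2\,d\mathfrak{m}\big)^{1/2}$ (legitimate since $u\not\equiv 0$), and squaring gives the claimed Rayleigh-quotient bound, hence the lemma.

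The one genuinely delicate point, compared with the Riemannian case, is the bookkeeping forced by the asymmetry of $F$: Lemma~\ref{coarea} naturally outputs $F^*(-df)$ rather than $F^*(df)$, and it is exactly the passage from $F^*(-du)$ to $F^*(du)$ that costs the factor $\Lambda_F^2$ --- which is also the source of the $\kappa$-dependence in Theorem~\ref{thm-lamd-L} via~(\ref{Lamba}). A secondary technicality is the density/continuity reduction to $C_0^\infty$ test functions in the norm $\|\cdot\|_\Omega$, but this is routine.
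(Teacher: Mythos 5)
Your proposal is correct and takes essentially the same route as the paper: apply Lemma~\ref{coarea} to the square of the test function, use the layer-cake formula and the definition of ${\rm SCh}_F$ to bound $\int u^2$, convert $F^*(-d(u^2))$ into $2\Lambda_F|u|F^*(du)$ via positive homogeneity and the reversibility constant, and finish with Cauchy--Schwarz. The only (cosmetic) difference is that you apply the co-area lemma once, directly to $u^2$, whereas the paper decomposes $u^2=u_+^2+u_-^2$ and applies it to each piece separately before pulling out a common factor of $2\Lambda_F$ --- your version is a mild streamlining, and your explicit note that $F^*(-\xi)=\overleftarrow{F}^{*}(\xi)\le\Lambda_F F^*(\xi)$ makes the source of the $\Lambda_F^2$ loss cleaner than in the paper.
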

\begin{proof}
Let $\Omega \subset M$ be a domain with compact closure and nonempty boundary, and ${\rm SCh}_{F}(\Omega):=\inf\limits_{E} \frac{\mathfrak{m}^{+}(E)}{\mathfrak{m}(E)}$ for all bounded subset $E\subset \Omega$. For any $f\in C_{0}^{\infty}(\Omega)$, we can decompose $f$ into the sum of the positive and negative parts, $f=f_{+}+f_{-}$, where $f_{+}:=\max\{f,0\}$, $f_{-}:=\min\{f,0\}$. Then by Lemma \ref{co-area}, we have
\beqn
{\rm SCh}_{F}(\Omega)\int_{\Omega}f^{2}d\mathfrak{m}&=&{\rm SCh}_{F}(\Omega)\int_{\Omega}(f_{+}^{2}+f_{-}^{2})d\mathfrak{m}\\
&=& {\rm SCh}_{F}(\Omega)\left(\int_{\Omega}\int^{\infty}_{0}\mathbf{1}_{\{x\in\Omega|f_{+}^{2}\geq t\}}(x)dtd\mathfrak{m}+\int_{\Omega}\int^{\infty}_{0}\mathbf{1}_{\{x\in\Omega|f_{-}^{2}\geq t\}}(x)dtd\mathfrak{m}\right)\\
&=&{\rm SCh}_{F}(\Omega) \left(\int^{\infty}_{0}\mathfrak{m}(\{x\in\Omega|f_{+}^{2}\geq t\})dt+ \int^{\infty}_{0}\mathfrak{m}(\{x\in\Omega|f_{-}^{2}\geq t\})dt\right)\\
&\leq & \int^{\infty}_{0} \mathfrak{m}^{+}(\{x\in\Omega|f_{+}^{2}\geq t\})dt+\int^{\infty}_{0} \mathfrak{m}^{+}(\{x\in\Omega|f_{-}^{2}\geq t\})dt\\
&\leq & \int_{\Omega}F^{*}\left(-d(f_{+}^{2})\right)d\mathfrak{m}+\int_{\Omega}F^{*}\left(-d(f_{-}^{2})\right)d\mathfrak{m}\\
&\leq & 2\Lambda_{F} \left(\int_{\Omega}f_{+}F^{*}(df_{+})d\mathfrak{m}+\int_{\Omega}(-f_{-})F^{*}(df_{-})d\mathfrak{m}\right)\\
&\leq & 2\Lambda_{F} \int_{\Omega} |f| F^{*}(df)d\mathfrak{m} \leq 2\Lambda_{F} \left(\int_{\Omega} f^{2}d\mathfrak{m}\right)^{\frac{1}{2}}\left(\int_{\Omega} F^{*}(df)^{2}d\mathfrak{m}\right)^{\frac{1}{2}},
\eeqn
where $\mathbf{1}_{A}$ denotes the characteristic function of set $A$. Thus, we obtain that
$$
\lambda_{1}(\Omega)\geq \frac{1}{4\Lambda_{F}^{2}}{\rm SCh}_{F}(\Omega)^{2}\geq \frac{1}{4\Lambda_{F}^{2}}{\rm SCh}_{F}^{2}.
$$
Let $\Omega:=B^{+}_{R}(x_0)$ for any $x_{0}\in M$ and $R>0$. Then by letting $R\rightarrow\infty$, we get (\ref{lower-1}).
\end{proof}

In the following, we focus on determining the upper bound of the first eigenvalue of Finsler Laplacian.  Inspired by the argument of Lemma 24.3 in \cite{peterli}, we can prove the following result.

\begin{lem}\label{upper}
Let $(M,F, \mathfrak{m})$ be an $n$-dimensional  complete non-compact Finsler metric measure manifold with uniform smoothness and uniform convexity. Then the first eigenvalue satisfies
\be
\lambda_1(M)\leq \frac{\kappa^2}{4}{\rm VE}_{F}^{2}. \label{upper-M}
\ee
\end{lem}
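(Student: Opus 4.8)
The plan is to mimic the Riemannian argument behind Buser-type upper bounds (Lemma 24.3 in \cite{peterli}): construct an explicit family of test functions supported on large forward geodesic balls whose Rayleigh quotients converge to $\frac{\kappa^2}{4}{\rm VE}_F^2$. Fix $x_0\in M$ and write $r(x):=d(x_0,x)$, $V(R):=\mathfrak{m}(B_R^+(x_0))$. For a parameter $a>0$ to be chosen, consider the radial function $u_R(x):=e^{-a\,r(x)}-e^{-aR}$ on $B_R^+(x_0)$, extended by $0$ outside; since $r$ is Lipschitz with $F^*(dr)\le 1$ (indeed $F(\nabla r)=1$ a.e.\ on $M\setminus\{x_0\}$ by the structure of the forward distance), $u_R\in H^{1,2}_0(B_R^+(x_0))$ and $du_R=-a e^{-a r}\,dr$ a.e. The key point is that $F^*(du_R)^2 = a^2 e^{-2ar}F^*(dr)^2$, and here uniform convexity enters: $F^*(dr)^2\le \kappa\, \overleftarrow F(\nabla r)^2$ — more carefully, one uses $F^*(-dr)\le \sqrt{\kappa}\,F^*(dr)$-type bounds together with $F^{*}(dr)=\overleftarrow F^{*}(\cdot)$ comparisons from \eqref{usk}/\eqref{unisc}. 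The net effect is a factor $\kappa$ (not $\kappa^2$ yet — see below) multiplying the naive Riemannian estimate of the gradient term.

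Next I would estimate the two integrals using the co-area formula together with $\frac{d}{dR}V(R)\le$ (something controlled by $\mathfrak{m}^+$), or more simply integrate radially: $\int_{B_R^+} F^*(du_R)^2\,d\mathfrak{m}\le a^2\kappa\int_0^R e^{-2as}\,dV(s)$ and $\int_{B_R^+} u_R^2\,d\mathfrak{m}\ge \int_0^R (e^{-as}-e^{-aR})^2\,dV(s)$ (after discarding the part of the ball where $u_R$ is small, or by a direct layer-cake computation). Integrating by parts in the Stieltjes integrals and using the definition of the volume entropy, $V(s)\le e^{({\rm VE}_F+\varepsilon)s}$ for $s$ large, one finds that for any $a>\frac{1}{2}{\rm VE}_F$ the numerator integral converges as $R\to\infty$ while for $a<\frac12{\rm VE}_F$ it may blow up; balancing the exponents gives that the Rayleigh quotient $\mathcal R(u_R)$ satisfies $\limsup_{R\to\infty}\mathcal R(u_R)\le \kappa\, a^2$ once $a>\tfrac12{\rm VE}_F$, and then letting $a\downarrow \tfrac12{\rm VE}_F$ yields $\lambda_1(M)\le \kappa\cdot\frac14{\rm VE}_F^2$. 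To recover the stated $\frac{\kappa^2}{4}$ one must be a little wasteful: the asymmetry of $F$ forces using $\overleftarrow F$ at one step and $F$ at another, each costing a factor $\sqrt\kappa$ (via $\Lambda_F\le\sqrt\kappa$ from \eqref{Lamba}), so the clean bound that survives all the irreversibility is $\kappa^2$ rather than $\kappa$. I would therefore phrase the gradient estimate as $F^*(du_R)^2\le \kappa^2 a^2 e^{-2ar}$, absorbing both a convexity factor and a reversibility factor, which is the honest cost of working on a non-reversible space.

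The main obstacle is the gradient computation and the radial co-area bookkeeping on a genuinely Finslerian, non-reversible ball: one must be careful that $u_R$ is a legitimate element of $H^{1,2}_0$ (so that $\lambda_1(B_R^+(x_0))$ bounds its Rayleigh quotient), that $F^*(du_R)$ is controlled pointwise a.e.\ — which is where uniform smoothness/convexity \eqref{usk} and the resulting ellipticity \eqref{unisc} are used — and that the limit $R\to\infty$ together with $a\downarrow\frac12{\rm VE}_F$ can be taken in the right order. A secondary technical point is handling the case ${\rm VE}_F=+\infty$ (then \eqref{upper-M} is vacuous) and ${\rm VE}_F=0$ (then one must still produce test functions with arbitrarily small Rayleigh quotient, e.g.\ take $a\to 0^+$ and use that $V$ grows subexponentially). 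Once these are in place, combining Lemma \ref{lower} (with $\Lambda_F\le\sqrt\kappa$) and Lemma \ref{upper} gives Theorem \ref{thm-lamd-L}.
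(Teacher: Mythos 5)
Your proposal takes a genuinely different route from the paper. The paper follows Lemma 24.3 of \cite{peterli}: it inserts the self-referential test function $\phi\, e^{-\delta\kappa^{-1}\sqrt{\lambda_1}\, r}$ (with $\phi$ a linear cutoff) into the Rayleigh quotient, bounds $F^{*2}(du)\le\kappa\, du(\nabla^{\nabla r}u)$ via the uniform ellipticity \eqref{unisc}, and after estimating cross terms arrives at the explicit intermediate inequality \eqref{upper-del}, whose right-hand side tends to $\frac{\kappa^2}{4}{\rm VE}_F^2$ as $R\to\infty$ and $\delta\to1$. You instead use the direct Brooks-type competitor $u_R=e^{-ar}-e^{-aR}$ on $B^+_R(x_0)$ and estimate the Rayleigh quotient by a layer-cake/Stieltjes calculation against $V(s)=\mathfrak m(B^+_s(x_0))$. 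That is a legitimate alternative: $\overline{B^+_R(x_0)}$ is compact by forward Hopf--Rinow, $r$ is Lipschitz, $u_R$ is admissible in $H_0^{1,2}$ of a slightly larger ball, $du_R=-ae^{-ar}\,dr$ a.e., both Stieltjes integrals converge for $a>\tfrac12({\rm VE}_F+\varepsilon)$ by monotone convergence, and the ratio tends to the bound on $\kappa a^2$; sending $a\downarrow\tfrac12{\rm VE}_F$ finishes. Your method is arguably simpler in that it avoids the bootstrap on $\lambda_1$ and the expansion of $du(\nabla^{\nabla r}u)$.

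One correction to the bookkeeping, which you flag yourself as the uncertain point: you do \emph{not} lose two factors of $\sqrt{\kappa}$. Irreversibility enters exactly once, in the evaluation $F^*(du_R)=a e^{-ar}F^*(-dr)$. Since $F^*(-\xi)\le\Lambda_F\, F^*(\xi)$ (from $F(w)\le\Lambda_F F(-w)$), and since the eikonal identity $F^*(dr)=F(\nabla r)=1$ holds a.e.\ for the forward distance $r(\cdot)=d(x_0,\cdot)$, one gets $F^*(du_R)^2\le\Lambda_F^2 a^2 e^{-2ar}\le\kappa\, a^2 e^{-2ar}$ using \eqref{Lamba}. There is no second place a reversibility or convexity constant is paid: the denominator $\int u_R^2\,d\mathfrak m$ is metric-free. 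Carried out cleanly your argument therefore gives the stronger estimate $\lambda_1(M)\le\frac{\kappa}{4}{\rm VE}_F^2$, which implies \eqref{upper-M} a fortiori since $\kappa\ge1$. The "wasteful'' $\kappa^2$ you pencil in is unnecessary; you should trust the single bound $F^*(-dr)\le\Lambda_F\le\sqrt{\kappa}$ and drop the speculative language about a second cost of irreversibility.
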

\begin{proof}
We will mainly prove that for any $0<\delta<1$, $R>2$ and $x_0\in M$,
\beq
\lambda_1(B^{+}_{2R}(x_0))\leq \frac{\kappa^2}{4\delta^2(R-1)^2}\left(\log\frac{\mathfrak{m}(B^{+}_{R}(x_0))}{\mathfrak{m}(B^{+}_{1}(x_0))}+\frac{\delta}{2\kappa}+\log\frac{8\kappa(2\kappa\tilde{\kappa}+\delta)}{\kappa-\delta^2}\right)^2.\label{upper-del}
\eeq
Obviously,  by letting first $R \rightarrow +\infty$ and then letting $\delta\rightarrow 1$ on the both sides of inequality (\ref{upper-del}), we can obtain (\ref{upper-M}).

For the simplicity  of notation, we will use $\lambda_1$ to denote $\lambda_1(B^{+}_{2R}(x_0))$. We may assume that $\lambda_{1}\geq \frac{1}{4 R^{2}}$ (otherwise the conclusion (\ref{upper-del}) automatically holds). Let $r(x)=d(x_{0}, x)$ denote the distant function starting from a fixed point $x_{0}\in M$. The variation principle implies that
\beq
\lambda_1 \int_{B^{+}_{2R}(x_0)} \phi^2 {\rm e}^{-2\delta \kappa^{-1}\sqrt{\lambda_1}r(x)}d\mathfrak{m} &\leq& \int_{B^{+}_{2R}(x_0)} F^{*2}\left(d(\phi {\rm e}^{-\delta \kappa^{-1}\sqrt{\lambda_1}r(x)})\right)d\mathfrak{m} \nonumber\\
&\leq& \kappa \int_{B^{+}_{2R}(x_0)} d(\phi {\rm e}^{-\delta \kappa^{-1}\sqrt{\lambda_1}r(x)})\left(\nabla^{\nabla r}(\phi {\rm e}^{-\delta \kappa^{-1}\sqrt{\lambda_1}r(x)})\right) d\mathfrak{m}\nonumber\\
&\leq& \kappa \int_{B^{+}_{2R}(x_0)} {\rm e}^{-2\delta \kappa^{-1}\sqrt{\lambda_1}r(x)} d\phi(\nabla^{\nabla r}\phi)d\mathfrak{m}\nonumber\\
& &-2\delta \sqrt{\lambda_1} \int_{B^{+}_{2R}(x_0)} \phi {\rm e}^{-2\delta \kappa^{-1}\sqrt{\lambda_1} r(x)} d\phi(\nabla r)d\mathfrak{m}\nonumber\\
& & + \delta^2 \lambda_1 \kappa^{-1}\int_{B^{+}_{2R}(x_0)} \phi^2{\rm e}^{-2\delta \kappa^{-1}\sqrt{\lambda_1}r(x)} d\mathfrak{m}\label{upper-lambda}
\eeq
for any non-negative function $\phi\in C_{0}^{\infty}(B^{+}_{2R}(x_0))$, where we have used (\ref{unisc}) in the second inequality. For any $R>2$, we choose a cut off function $\phi$ as
$$
\phi=\begin{cases} 1, & B^{+}_{R-\frac{1}{4\sqrt{\lambda_{1}}}}(x_{0}), \\ 4\sqrt{\lambda_{1}}(R-r(x)), & B^{+}_{R}(x_{0}) \setminus B^{+}_{R-\frac{1}{4\sqrt{\lambda_{1}}}}(x_{0}), \\ 0, &  B^{+}_{2R}(x_0) \setminus B^{+}_{R}(x_{0}).\end{cases}
$$
Then $F^{*}(-d \phi) \leq 4\sqrt{\lambda_{1}}$ a.e. on $B^{+}_{R}(x_{0})$. Thus, from (\ref{upper-lambda}) we can obtain the following inequality.
\beqn
\lambda_{1} (1-\delta^2\kappa^{-1}){\rm e}^{-2\delta \kappa^{-1}\sqrt{\lambda_1}}\mathfrak{m}(B^{+}_{1}(x_{0}))
&\leq&\lambda_1 (1-\delta^2\kappa^{-1})\int_{B^{+}_{2R}(x_0)} \phi^2 {\rm e}^{-2\delta \kappa^{-1}\sqrt{\lambda_1}r(x)}d\mathfrak{m}\\
&\leq& \kappa \int_{B^{+}_{2R}(x_0)} {\rm e}^{-2\delta \kappa^{-1}\sqrt{\lambda_1}r(x)} d\phi(\nabla^{\nabla r}\phi)d\mathfrak{m}\\
  & &-2\delta \sqrt{\lambda_1} \int_{B^{+}_{2R}(x_0)} {\rm e}^{-2\delta \kappa^{-1}\sqrt{\lambda_1}r(x)} d\phi(\nabla r)d\mathfrak{m}.
\eeqn
By (\ref{unisc}), we have $d\phi(\nabla^{\nabla r}\phi)= -d\phi(\nabla^{\nabla r}(-\phi))\leq \tilde{\kappa} F^{*2}(-d\phi)$. Furthermore, we can get the following
\beqn
\lambda_{1} (1-\delta^2\kappa^{-1}){\rm e}^{-2\delta \kappa^{-1}\sqrt{\lambda_1}}\mathfrak{m}(B^{+}_{1}(x_{0}))
&\leq& \kappa \tilde{\kappa}\int_{B^{+}_{2R}(x_0)} {\rm e}^{-2\delta \kappa^{-1}\sqrt{\lambda_1}r(x)} F^{*2}(-d\phi) d\mathfrak{m}\\
  & &+2\delta \sqrt{\lambda_1} \int_{B^{+}_{2R}(x_0)} {\rm e}^{-2\delta \kappa^{-1}\sqrt{\lambda_1}r(x)} F^{*}(-d\phi)d\mathfrak{m}\\
&\leq& 8(2\kappa \tilde{\kappa} +\delta)\lambda_1 \int_{B^{+}_{R}(x_{0}) \setminus B^{+}_{R-\frac{1}{4\sqrt{\lambda_{1}}}}(x_{0})} {\rm e}^{-2\delta \kappa^{-1} \sqrt{\lambda_1}\left(R-\frac{1}{4\sqrt{\lambda_1}}\right)} d\mathfrak{m}\\
&\leq& 8(2\kappa \tilde{\kappa} +\delta)\lambda_1  {\rm e}^{-2\delta \kappa^{-1} \sqrt{\lambda_1}\left(R-\frac{1}{4\sqrt{\lambda_1}}\right)} \mathfrak{m}(B^{+}_{R}(x_{0})),
\eeqn
from which we can obtain
$$
\lambda_1(B^{+}_{2R}(x_0))\leq \frac{\kappa^2}{4\delta^2(R-1)^2}\left(\log\frac{\mathfrak{m}(B^{+}_{R}(x_0))}{\mathfrak{m}(B^{+}_{1}(x_0))}+\frac{\delta}{2\kappa}+ \log\frac{8\kappa(2\kappa\tilde{\kappa}+\delta)}{\kappa-\delta^2}\right)^{2},
$$
which is just (\ref{upper-del}). This completes the proof of Lemma \ref{upper}.
\end{proof}

\vskip 2mm

\begin{proof}[Proof of Theorem \ref{thm-lamd-L}]
Theorem \ref{thm-lamd-L} directly follows from Lemma \ref{lower} and Lemma \ref{upper}.
\end{proof}

\vskip 8mm

\end{document}